\renewcommand{\geq}{\geqslant}
\newcommand{\ptl}{\partial}
\newcommand{\rr}{{\mathbb{R}}}
\newcommand{\rrn}{\mathbb{R}^{n+1}}
\newcommand{\la}{\lambda}
\newcommand{\sph}{\mathbb{S}}
\newcommand{\escpr}[1]{\big<#1\big>}
\newcommand{\Sg}{\Sigma} 
\newcommand{\sg}{\sigma}
\newcommand{\Om}{\Omega}
\newcommand{\eps}{\varepsilon}
\newcommand{\var}{\varphi}
\newcommand{\de}{\mathcal{D}}
\newcommand{\cc}{\mathcal{C}}
\newcommand{\ovn}{\overline{N}}
\newcommand{\ww}{\mathcal{W}}
\DeclareMathOperator{\divv}{div}
\newtheorem{theorem}{Theorem}[section]
\newtheorem{proposition}[theorem]{Proposition}
\newtheorem{lemma}[theorem]{Lemma}
\newtheorem{corollary}[theorem]{Corollary}
\theoremstyle{definition}
\newtheorem{remark}[theorem]{Remark}
\newtheorem{remarks}[theorem]{Remarks}
\newtheorem{example}[theorem]{Example}
\newtheorem{examples}[theorem]{Examples}
\theoremstyle{remark}
\numberwithin{equation}{section}
\begin{document}

\title[Anisotropic stability in convex solid cones]{Compact anisotropic stable hypersurfaces with free boundary in convex solid cones}

\author[C\'esar Rosales]{C\'esar Rosales}
\address{Departamento de Geometr\'{\i}a y Topolog\'{\i}a and Excellence Research Unit ``Modeling Nature'' (MNat) Universidad de Granada, E-18071,
Spain.} 
\email{crosales@ugr.es}

\date{February 13, 2023}

\thanks{The author was supported by the research grant PID2020-118180GB-I00 funded by MCIN/AEI/10.13039/501100011033 and the Junta de Andaluc\'ia grant PY20-00164.} 

\subjclass[2020]{49Q20, 53C42} 

\keywords{Convex solid cone, anisotropic area, free boundary, stable hypersurface}

\begin{abstract}
We consider a convex solid cone $\cc\subset\rrn$ with vertex at the origin and boundary $\ptl\cc$ smooth away from $0$. Our main result shows that a compact two-sided hypersurface $\Sg$ immersed in $\cc$ with free boundary in $\ptl\cc\setminus\{0\}$ and minimizing, up to second order, an anisotropic area functional under a volume constraint is contained in a Wulff-shape.
\end{abstract}

\maketitle

\thispagestyle{empty}

\section{Introduction}
\label{sec:intro}
\setcounter{equation}{0}

In this paper we study a variational problem for Euclidean hypersurfaces associated to an energy functional of \emph{anisotropic character}. This means that the energy is computed by integrating an elliptic parametric function (or surface tension) which depends on the normal direction along the hypersurface. As it is explained in the introduction of Taylor~\cite{taylor} these functionals provide a mathematical model to study solid crystals. In the particular case of a constant surface tension we obtain the isotropic case, where the energy is proportional to the Euclidean area.

The surface tension that we consider in this work is the asymmetric norm given by the support function $h_K$ of a smooth strictly convex body $K\subset\rrn$ containing the origin in its interior. The corresponding \emph{anisotropic area} of a two-sided hypersurface $\Sg$ with unit normal $N$ has the expression  $A_K(\Sg):=\int_\Sg h_K(N)\,d\Sg$, where $d\Sg$ is the Euclidean area element. By using the metric projection onto $K$ it is possible to define also anisotropic counterparts to some of the classical notions in the extrinsic geometry of hypersurfaces, see Reilly~\cite{reilly-anisotropic} and Section~\ref{sec:prelimi} for a precise description. When $K$ is the round unit ball about the origin we have $h_K(N)=1$, so that we recover the isotropic situation. We point out that the anisotropic geometry of hypersurfaces is typically introduced by means of a function $F:\sph^n\to\rr^+$ instead of a convex body $K$, see Remark~\ref{re:other} for more details.

It is well known that the round spheres uniquely solve the isoperimetric problem in $\rrn$. In our anisotropic setting, the strictly convex hypersurface $\ptl K$ uniquely minimizes, up to translations and dilations centered at the origin, the anisotropic area computed with respect to the outer unit normal among hypersurfaces enclosing the same Euclidean volume. There are different proofs of this statement relying on analytical and geometric techniques, see Taylor~\cite{taylor-unique,taylor}, Fonseca and Müller~\cite{fonseca-muller}, Brothers and Morgan~\cite{brothers-morgan}, Milman and Rotem~\cite{milman-rotem}, and Cabr\'e, Ros-Oton and Serra~\cite{cabre-rosoton-serra}. The minimizer $\ptl K$ is usually referred to as the \emph{Wulff shape} in honor of the crystallographer Georg Wulff, who first constructed the optimal crystal for a specified integrand~\cite{wulff}. 

As in the isotropic case, it is interesting to analyze the critical points (stationary hypersurfaces) and the second order minima (stable hypersurfaces) of the anisotropic area under a volume constraint. From the first variational formulas, a hypersurface $\Sg$ is \emph{anisotropic stationary} if and only if its \emph{anisotropic mean curvature} $H_K$ defined in \eqref{eq:mc} is constant, see for instance Palmer~\cite{palmer} or Clarenz~\cite{clarenz-2002}. This motivated the generalization to the anisotropic context of some classical results for constant mean curvature hypersurfaces in $\rrn$. In this direction, an Alexandrov type theorem (resp. a Hopf type theorem) characterizing the Wulff shape among compact embedded hypersurfaces (resp. immersed spheres) with $H_K$ constant was obtained by Morgan~\cite{morgan-alexandrov} for $n=1$, and by He, Li, Ma and Ge~\cite{alexandrov-anisotropic} in higher dimension (resp. by He and Li~\cite{he-li-hopf}, and Koiso and Palmer~\cite{koiso-palmer-hopf}). On the other hand, Palmer~\cite{palmer} and Winklmann~\cite{winklmann} employed the second variation of the anisotropic area to show that, up to translation and homothety, the Wulff shape $\ptl K$ is the unique compact, two-sided,  \emph{anisotropic stable hypersurface} immersed in $\rrn$. This is an anisotropic extension of a celebrated theorem of Barbosa and do Carmo~\cite{bdc} also discussed by Wente~\cite{wente}.

When we consider a smooth proper domain $\Om$ instead of the whole space $\rrn$, we are naturally led to study the \emph{anisotropic partitioning problem}. Here, we seek minimizers of the functional $A_K$ among hypersurfaces inside $\overline{\Om}$, possibly with non-empty boundary contained in $\ptl\Om$, and separating a fixed Euclidean volume. The critical points for this problem are called anisotropic stationary hypersurfaces \emph{with free boundary} in $\ptl\Om$. From the calculus of $A_K'(0)$, and reasoning as Koiso and Palmer~\cite{koiso-palmer2}, it follows that a hypersurface $\Sg$ is anisotropic stationary with free boundary in $\ptl\Om$ if and only if $H_K$ is constant on $\Sg$ and the anisotropic normal $N_K$ introduced in \eqref{eq:normal} is tangent to $\ptl\Om$ along $\ptl\Sg$. The more general case of \emph{anisotropic capillary hypersurfaces} arises when both, $H_K$ and the angle between $N_K$ and the inner unit normal to $\ptl\Om$, are constant. These are critical points of an energy functional which involves not only $A_K(\Sg)$ but also the wetting area of the set bounded by $\ptl\Sg$ in $\ptl\Om$. We remark that isotropic capillary hypersurfaces in convex domains of $\rrn$ have been extensively investigated, see for instance Ros and Souam~\cite{ros-souam}, Wang and Xia~\cite{wang-xia}, and the references therein.

An anisotropic stationary hypersurface with free boundary in $\ptl\Om$ is \emph{stable} if $A''_K(0)$ is nonnegative under variations preserving the volume separated by $\Sg$ and the boundary of $\Om$. The computation of $A_K''(0)$ and the subsequent analysis of anisotropic stable hypersurfaces have been treated in several previous works. In \cite{koiso-palmer2,koiso-palmer4,koiso-palmer3}, Koiso and Palmer considered stable capillary surfaces when $\Om$ is a slab of $\rr^3$ and the Wulff shape is rotationally symmetric about a line orthogonal to $\ptl\Om$. In \cite{koiso-equilibrium,koiso-2023}, Koiso classified compact anisotropic stable capillary hypersurfaces disjoint from the edges in wedge-shaped domains of $\rrn$. Inside a smooth domain of revolution $\Om\subset\rr^3$, and for certain rotationally symmetric surface tensions, Barbosa and Silva~\cite{barbosa-silva} established that the totally geodesic disks orthogonal to the revolution axis are the unique compact stationary surfaces with free boundary in $\ptl\Om$ and meeting $\ptl\Om$ orthogonally. By assuming convexity of $\Om$ they also discussed the stability of these surfaces. More recently, Jia, Wang, Xia and Zhang~\cite{jwxz} have employed a Heintze-Karcher inequality and a Minkowski formula to prove that any compact, embedded, anisotropic capillary hypersurface inside an open half-space of $\rrn$ is part of a Wulff shape. In \cite{guo-xia}, besides the computation of a very general second variation formula, Guo and Xia have generalized the argument of Winklmann~\cite{winklmann} after Barbosa and do Carmo~\cite{bdc} to show that a compact, two-sided, anisotropic stable capillary hypersurface immersed in a Euclidean half-space of $\rrn$ is a truncated Wulff shape.

In this paper we are interested in the stability question when the ambient domain is a solid cone $\cc\subset\rrn$ with vertex at the origin and boundary $\ptl\cc$ smooth away from $0$. In the isotropic case it is known that round balls about the vertex provide the unique solutions to the partitioning problem in a (possibly non-smooth) convex cone $\cc$, up to translations leaving $\cc$ invariant, see Lions and Pacella~\cite{lp}, and Figalli and Indrei~\cite{figalli-indrei}. When $\ptl\cc\setminus\{0\}$ is smooth, Ritor\'e and the author~\cite{cones} solved the problem by combining an existence result with the classification of compact stable hypersurfaces with free boundary in $\ptl\cc$. We must also mention the papers of Choe and Park~\cite{choe-cones}, and Pacella and Tralli~\cite{pacella-tralli}, where an Alexandrov type theorem for compact, embedded, stationary hypersurfaces in convex cones is proven. 

In the anisotropic case it is easy to check that the truncated Wulff shape $\ptl K\cap\overline{\cc}$ is an anisotropic stationary hypersurface with free boundary in $\ptl\cc$, see Example~\ref{ex:typical}. In \cite{weng}, Weng found some conditions ensuring that, up to translation and homothety, this is the unique compact and embedded hypersurface with constant anisotropic mean curvature in $\cc$. When the cone $\cc$ is convex, results of Milman and Rotem~\cite{milman-rotem}, and Cabr\'e, Ros-Oton and Serra~\cite{cabre-rosoton-serra} for anisotropic area functionals with homogeneous weights entail that $\ptl K\cap\overline{\cc}$ minimizes the anisotropic area (computed with respect to the outer unit normal) for fixed volume. We remark that the techniques in \cite{milman-rotem} and \cite{cabre-rosoton-serra} still hold under weaker regularity assumptions on the cone $\cc$ and the surface tension. By extending arguments of Figalli and Indrei~\cite{figalli-indrei}, the uniqueness of $\ptl K\cap\overline{\cc}$ as a solution to the partitioning problem in $\cc$ was recently discussed by Dipierro, Poggesi and Valdinoci \cite[Thm.~4.2]{dipierro-poggesi-valdinoci}. The fact that $\ptl K\cap\overline{\cc}$ is a minimizer implies, in particular, the weaker property that it is an anisotropic stable hypersurface with free boundary in $\ptl\cc$. Our main result, Theorem~\ref{th:main}, is the following uniqueness statement:
\begin{quotation}
\emph{Up to a translation and a dilation centered at the vertex, any compact, connected, two-sided, anisotropic stable hypersurface immersed in a convex solid cone $\cc$ with free boundary in $\ptl\cc\setminus\{0\}$ is part of the Wulff shape.}
\end{quotation}
We must observe that, in general, we cannot expect $\ptl K\cap\overline{\cc}$ to be the only anisotropic stable hypersurface, up to translation and homothety, see Example~\ref{ex:nonuniqueness}. This motivates us to find additional conditions on the cone in order to deduce stronger rigidity consequences. With this idea in mind, in Corollary~\ref{cor:stronger} we prove that a compact anisotropic stable hypersurface in a cone $\cc$ over a smooth strictly convex domain of the sphere $\sph^n$ must be a dilation of $\ptl K\cap\overline{\cc}$. On the other hand, for the case of a Euclidean half-space $\mathcal{H}$, recently treated by Guo and Xia~\cite{guo-xia}, we can conclude that $\ptl K\cap\overline{\mathcal{H}}$ is the only compact anisotropic stable hypersurface, up to translations along the boundary hyperplane.

Our proof of Theorem~\ref{th:main} is different from the ones in \cite{cones} and \cite{guo-xia}, which are mainly inspired in Barbosa and do Carmo~\cite[Thm.~(1.3)]{bdc}. Instead, we adapt to our setting the idea employed by Wente~\cite{wente} to characterize the round spheres as the only compact, two-sided, stable hypersurfaces immersed in $\rrn$. Wente applied the stability inequality with the special volume-preserving deformation obtained by parallel hypersurfaces dilated to keep the volume constant. After computing the second variation formulas for this particular variation, he observed that the area decreases strictly unless the hypersurface is a round sphere. By using dilations of \emph{anisotropic parallel hypersurfaces}, Palmer~\cite{palmer} proved the uniqueness of the Wulff shape as a compact, two-sided, anisotropic stable hypersurface immersed in $\rrn$. The variation of a hypersurface $\Sg$ by anisotropic parallels is defined as $\psi_t(p):=p+t\,N_K(p)$, where $p\in\Sg$, $t\in\rr$ and $N_K$ is the anisotropic normal introduced in \eqref{eq:normal}. With a similar argument, Koiso~\cite[Thm.~4]{koiso-equilibrium}, \cite[Thm.~1]{koiso-2023} was able to classify compact anisotropic stable hypersurfaces disjoint from the edges in a wedge-shape domain $\Om$ of $\rrn$. Moreover, after a suitable translation, the same variation allows to treat the capillary case. We remark that such a deformation is possible because $\ptl\Om$ consists of finitely many pieces of hyperplanes.

In a convex cone $\cc$ different from a half-space, Palmer's deformation does not work in general. Indeed, though $N_K$ is tangent to $\ptl\cc$ along $\ptl\Sg$, we cannot ensure that $\ptl\cc$ contains a small anisotropic normal segment $p+t\,N_K(p)$ centered at any point $p\in\ptl\Sg$. To solve this difficulty we replace the anisotropic parallels deformation of $\Sg$ with a variation $\Sg_t:=\psi_t(\Sg)$ associated to the one-parameter group $\{\psi_t\}_{t\in\rr}$ of a vector field $X$ on $\rrn$ such that $X(0)=0$, $X$ is tangent on $\ptl\cc\setminus\{0\}$ and $X_{|\Sg}=N_K$. After this, as $\ptl\cc$ is invariant under dilations centered at $0$, we can apply to $\Sg_t$ a suitable dilation in order to construct a deformation that preserves both, $\ptl\cc$ and the volume of $\Sg$. To finish the proof we compute the second derivative of the anisotropic area for this variation, and we discover that it is strictly negative unless $\Sg$ is contained in the Wulff shape (up to translation and homothety). 

We emphasize that, to prove Theorem~\ref{th:main}, we only need to calculate $A_K''(0)$ for deformations associated to a smooth vector field $X$ on $\rrn$ which is tangent on $\ptl\cc\setminus\{0\}$ and satisfies $X_{|\Sg}=N_K$. This is done in Proposition~\ref{prop:2ndarea} for arbitrary compact hypersurfaces with non-empty boundary in $\rrn$. As a difference with respect to the anisotropic parallel deformation, a boundary term involving the acceleration vector field $Z$ appears. For anisotropic stationary hypersurfaces in $\cc$ we see in Proposition~\ref{prop:2ndareavol} that this term is related to the extrinsic Euclidean geometry of $\ptl\cc$. As the formula for $A_K''(0)$ in Proposition~\ref{prop:2ndareavol} is valid for any smooth domain $\Om$, this shows that the stability condition is more restrictive when $\Om$ is convex. It is worth mentioning that, in most of the aforementioned works about anisotropic capillary hypersurfaces in a Euclidean domain $\Om$, this boundary term had no relevance since $\ptl\Om$ is totally geodesic.

Finally, we remark that the proof of Theorem~\ref{th:main} is still valid in non-smooth convex cones, provided the boundary $\ptl\Sg$ of an anisotropic stable hypersurface $\Sg$ is inside a smooth open part of $\ptl\cc$. 

The paper is organized into three sections besides this introduction. In Section~\ref{sec:prelimi} we review some basic facts about the anisotropic geometry of hypersurfaces. In Section~\ref{sec:varform} we compute the second derivative of the anisotropic area for certain deformations of a compact hypersurface with non-empty boundary. Finally, in Section~\ref{sec:main} we prove our classification result of anisotropic stable hypersurfaces in convex solid cones.

\section{Preliminaries}
\label{sec:prelimi}
\setcounter{equation}{0}

In this section we gather some definitions and results that will be needed throughout this work. Starting with a Euclidean convex body we will use its support function and metric projection to introduce the anisotropic area and notions of anisotropic extrinsic geometry for two-sided hypersurfaces. The relation between this point of view and previous approaches is shown in Remark~\ref{re:other}.  

By a \emph{convex body} (about the origin) we mean a compact convex set $K\subset\rrn$ containing $0$ in its interior. The associated \emph{support function} $h_K:\rrn\to\rr$ is given by
\[
h_K(w):=\max\{\escpr{u,w}\,;\,u\in K\}, 
\]
where $\escpr{\cdot\,,\cdot}$ is the standard scalar product in $\rrn$. This defines an \emph{asymmetric norm} in $\rrn$ (which is a norm when $K$ is centrally symmetric about $0$), see \cite[Sect.~1.7.1]{schneider}. For any $w\neq 0$, the \emph{supporting hyperplane} of $K$ with exterior normal $w$ is the set $\Pi_w:=\{p\in\rrn\,;\,\escpr{p,w}=h_K(w)\}$. The corresponding \emph{support set} is $\Pi_w\cap K$. This set is not empty because $K$ is compact. If $\Pi_w\cap K$ is a single point $p$, then $h_K$ is differentiable at $w$, and its gradient satisfies $(\nabla h_K)(w)=p$, see \cite[Cor.~1.7.3]{schneider}. 

Henceforth, we suppose that $K$ is a strictly convex body with smooth boundary $\ptl K$. Thus, for any $w\neq 0$, there is a point $\pi_K(w)\in\ptl K$ for which $\Pi_w\cap K=\{\pi_K(w)\}$. This provides a map $\pi_K:\rrn_*\to\ptl K$, which is called the \emph{$K$-projection}, and verifies these identities
\begin{equation}
\label{eq:gradhK}
\begin{split}
\escpr{\pi_K(w),w}&=h_K(w),\quad\text{for any }  w\neq 0,
\\
(\nabla h_K)(w)&=\pi_K(w), \quad\text{for any }  w\neq 0.
\end{split}
\end{equation}
On the other hand, if $\eta_K$ is the outer unit normal on $\ptl K$, then $(\eta_K\circ\pi_K)(w)=w$ for any $w\in\sph^n$. This equality and the fact that $\pi_K(\la\,w)=\pi_K(w)$ for any $w\neq 0$ and $\la>0$ entail that $\pi_K(w)=\eta_K^{-1}(w/|w|)$ for any $w\neq 0$ (here $|\cdot|$ stands for the Euclidean norm). As $\eta_K:\ptl K\to\sph^n$ is a diffeomorphism, the support function $h_K$ is smooth on $\rrn_*$. Observe that, when $K$ is the round unit ball about $0$, then $h_K(w)=|w|$ and $\pi_K(w)=w/|w|$ for any $w\neq 0$, whereas $\eta_K(p)=p$ for any $p\in\ptl K=\sph^n$.

Let $\var_0:\Sg\to\rrn$ be a smooth two-sided immersed hypersurface, possibly with smooth boundary $\ptl\Sg$. Most of the time we will omit the map $\var_0$. We will also identify any set $S\subseteq\Sg$ with $\var_0(S)$, and the tangent space $T_p\Sg$ at a point $p\in\Sg$ with $(d\var_0)_p(T_p\Sg)$. For a fixed smooth unit normal vector field $N$ on $\Sg$, the associated \emph{shape operator} at $p$ is the endomorphism $B_p:T_p\Sg\to T_p\Sg$ introduced by $B_p(w):=-D_wN$. Here $D$ denotes the Levi-Civita connection for the Euclidean metric. It is well known that $B_p$ is self-adjoint with respect to the metric induced by the scalar product in $\rrn$.

The \emph{anisotropic Gauss map} or \emph{anisotropic normal} in $\Sg$ is the map $N_K:\Sg\to\ptl K$ given by
\begin{equation}
\label{eq:normal} 
N_K:=\pi_K\circ N.
\end{equation} 
By setting $\var_K:=h_K(N)$ we infer from \eqref{eq:gradhK} that
\begin{equation}
\label{eq:nkn}
\escpr{N_K,N}=\var_K\quad\text{on }  \Sg. 
\end{equation}
When $\ptl\Sg\neq\emptyset$ we introduce the \emph{anisotropic conormal} by equality
\begin{equation}
\label{eq:conormal}
\nu_K:=\var_K\,\nu-\escpr{N_K,\nu}\,N,
\end{equation}
where $\nu$ is the \emph{inner conormal} along $\ptl\Sg$. Note that $\nu_K$ is a normal vector to $\ptl\Sg$ with $\escpr{N_K,\nu_K}=0$.

For any $p\in\Sg$ we have $T_{N_K(p)}(\ptl K)=T_p\Sg$, so that the differential $(dN_K)_p$ is an endomorphism of $T_p\Sg$. The \emph{anisotropic shape operator} at $p$ is defined by
\begin{equation}
\label{eq:shape}
(B_K)_p:=-(dN_K)_p=(d\pi_K)_{N(p)}\circ B_p.
\end{equation}
The \emph{anisotropic mean curvature} at $p$ is the number
\begin{equation}
\label{eq:mc}
H_K(p):=\frac{\text{tr}((B_K)_p)}{n}=-\frac{(\divv_\Sg N_K)(p)}{n},
\end{equation}
where $\divv_\Sg$ is the divergence relative to $\Sg$ and $\text{tr}(f)$ is the trace of an endomorphism $f:T_p\Sg\to T_p\Sg$. It is known, see for instance Palmer~\cite[p.~3666]{palmer}, that
\begin{equation}
\label{eq:ineq}
\text{tr}\big((B_K)_p^2\big)\geq nH_K(p)^2, \quad\text{for any } p\in\Sg,
\end{equation}
and equality holds if and only $p$ is an \emph{anisotropic umbilical point}, i.e., $(B_K)_p$ is a multiple of the identity map in $T_p\Sg$. This fact has a short proof that we reproduce here for the sake of completeness. 
Since $\nabla h_K=\pi_K$ in $\rrn_*$ and $K$ is a strictly convex body, then $(d\pi_K)_{N(p)}$ can be represented as a positive definite symmetric matrix of order $n$. Thus, there exist an orthonormal basis $\{e_1,\ldots,e_n\}$ of $T_p\Sg$ and $\{\la_1,\ldots,\la_n\}\subset\rr^+$ such that $(d\pi_K)_{N(p)}(e_j)=\la_j\,e_j$ for any $j=1,\ldots,n$. From \eqref{eq:shape} we get $(B_K)_p(e_i)=\sum_{j=1}^n\sg_{ij}\,\la_j\,e_j$, where $\sg_{ij}:=\escpr{B_p(e_i),e_j}$. By using the Cauchy-Schwarz inequality in $\rr^n$ with the vectors $(\sg_{11}\la_1,\ldots,\sg_{nn}\la_n)$ and $(1,\ldots,1)$, we obtain
\begin{align*}
\text{tr}(B_K^2)-nH_K^2=\sum_{i,j=1}^n\sg^2_{ij}\,\la_i\,\la_j-\frac{1}{n}\,\left(\sum_{i=1}^n\sg_{ii}\,\la_i\right)^2\geq\sum_{i,j=1}^n\sg^2_{ij}\,\la_i\,\la_j-\sum_{i=1}^n\sg^2_{ii}\,\la^2_i=\sum_{i\neq j}\sg^2_{ij}\,\la_i\,\la_j\geq 0.
\end{align*}
Equality holds if and only if $\sg_{ij}=0$ for any $i\neq j$ and $\sg_{ii}\,\la_i=\sg_{jj}\,\la_j$ for any $i,j=1,\ldots,n$. By denoting $\alpha:=\sg_{ii}\,\la_i$, this is equivalent to that $(B_K)_p(w)=\alpha\,w$ for any $w\in T_p\Sg$, as desired.

\begin{examples}
\label{ex:umbilical}
(i). If $K$ is the round unit ball about $0$, then $N_K=N$, $B_K=B$ and $H_K$ equals the Euclidean mean curvature of $\Sg$.

(ii). For a hyperplane $\Sg\subset\rrn$ oriented with a unit normal $N$, the map $N_K$ is constant, so that $B_K=0$ and $H_K=0$ in $\Sg$.

(iii). Consider the hypersurface $\Sg=\ptl K$ with outer unit normal $N=\eta_K$. Then $N_K(p)=p$, $(B_K)_p(w)=-w$ and $H_K(p)=-1$, for any $p\in\Sg$ and $w\in T_p\Sg$. Observe that, if $K$ is not centrally symmetric about $0$, and we take the inner normal $N=-\eta_K$ on $\Sg$, then similar equalities need not hold and $H_K$ may be nonconstant. So, the role of the chosen unit normal $N$ is very important for the computations.
\end{examples}

The previous examples show that $\ptl K$ and Euclidean hyperplanes are \emph{anisotropic umbilical hypersurfaces}, i.e., all of its points are anisotropic umbilical. Next, we provide a converse statement after Palmer~\cite[p.~3666]{palmer} and Clarenz~\cite[p.~358]{clarenz-2004}.

\begin{proposition}
\label{prop:umbilical}
Let $\Sg$ be a two-sided, connected, anisotropic umbilical hypersurface immersed in $\rrn$, and such that $H_K$ is constant. If $H_K=0$ then $\Sg$ is contained inside a hyperplane. If $H_K\neq 0$ then, up to a translation and a dilation centered at $0$, we have $\Sg\subset\ptl K$.
\end{proposition}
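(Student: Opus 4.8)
The anisotropic umbilicity hypothesis means that for each $p\in\Sg$ there is a scalar $\alpha(p)$ with $(B_K)_p=\alpha(p)\,\mathrm{Id}_{T_p\Sg}$, i.e. $(dN_K)_p=-\alpha(p)\,\mathrm{Id}$. The plan is to first show $\alpha$ is locally constant, hence constant since $\Sg$ is connected; the constancy of $H_K$ already gives this once we note $H_K=\alpha$, so in fact no extra work is needed there — $\alpha\equiv H_K$ is a fixed constant. Then I would integrate the first-order ODE $dN_K=-H_K\,d\var_0$ for the immersion. Writing $f:=\var_0:\Sg\to\rrn$ and recalling that $N_K=\pi_K\circ N$ takes values in $\ptl K$, the relation $(dN_K)_p=-H_K\,(df)_p$ holds as an identity of maps $T_p\Sg\to\rrn$ (both sides land in $T_p\Sg=T_{N_K(p)}\ptl K$).

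\emph{Case $H_K=0$.} Then $dN_K\equiv 0$, so $N_K$ is a constant vector $v_0\in\ptl K$. Since $N_K=\pi_K\circ N$ and $\pi_K$ is injective on $\sph^n$ (it is $\eta_K^{-1}$ composed with normalization), $N$ is a constant unit vector $u_0$, and then $D_w N=0$ for all $w$ shows $\escpr{f,u_0}$ has vanishing differential, so $f(\Sg)$ lies in an affine hyperplane orthogonal to $u_0$. This is the first assertion.

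\emph{Case $H_K\neq 0$.} Consider the map $g:=f+\tfrac{1}{H_K}\,N_K:\Sg\to\rrn$. Its differential is $dg=df+\tfrac{1}{H_K}\,dN_K=df-df=0$, so $g$ is a constant point $q_0\in\rrn$. Hence $f=q_0-\tfrac{1}{H_K}N_K$, i.e. after the translation $p\mapsto p-q_0$ the immersion satisfies $f=-\tfrac{1}{H_K}N_K$, so $-H_K\,f=N_K\in\ptl K$. Applying the dilation $x\mapsto -H_K\,x$ centered at the origin, the image of $\Sg$ is contained in $\ptl K$. (If one prefers a positive dilation factor: since $N=\eta_K\circ N_K$ and $N_K=-H_K f$, one checks the chosen orientation forces $H_K<0$ exactly as in Examples~\ref{ex:umbilical}(iii) when the image is genuinely $\partial K$ with outer normal; in any case the statement only claims containment up to a translation and a dilation centered at $0$, and the argument above delivers precisely that.)

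The only real subtlety — the step I expect to need the most care — is justifying that $N_K$ being constant forces $N$ constant in the $H_K=0$ case, and more generally that the ``integration'' $dg=0\Rightarrow g$ constant is legitimate: this uses that $\Sg$ is connected together with smoothness of $\pi_K$ on $\rrn_*$ (established earlier in the section from $\eta_K:\ptl K\to\sph^n$ being a diffeomorphism), so that $N_K$, hence $g$, is a genuine smooth $\rrn$-valued map with vanishing differential on a connected manifold. Everything else is the elementary observation that the umbilicity identity $(dN_K)_p=-H_K(df)_p$ is exactly the linear ODE whose solutions are, respectively, hyperplanes (when $H_K=0$) and translates-and-dilates of $\partial K$ (when $H_K\neq 0$), mirroring the classical isotropic proof that totally umbilical hypersurfaces are hyperplanes or round spheres.
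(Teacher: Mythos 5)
Your argument is correct and is essentially the paper's own proof: you observe that umbilicity plus constancy of $H_K$ makes the differential of $p\mapsto N_K(p)+H_K\,p$ vanish, and then integrate on the connected $\Sg$ to get either a constant $N_K$ (hence a hyperplane) or $p=c/H_K-(1/H_K)N_K(p)$, i.e.\ $\Sg$ inside a translate of a rescaled $\ptl K$. The only point where you go beyond the paper is the parenthetical discussion of the sign of $-1/H_K$, which the paper leaves implicit as well, so no further justification is required for the statement as claimed.
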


\begin{proof}
For any $p\in\Sg$, there is $\alpha(p)\in\rr$ such that $(B_K)_p(w)=\alpha(p)\,w$ for any $w\in T_p\Sg$. By definition~\eqref{eq:mc} it is clear that $\alpha(p)=H_K$ for any $p\in\Sg$. By equation~\eqref{eq:shape} we deduce that the differential of $N_K+H_K\,\text{Id}$ vanishes on $\Sg$. In case $H_K=0$ this implies that $N_K$ is a constant vector $n$, and so $\Sg$ is within a hyperplane with unit normal $\eta_K(n)$. When $H_K\neq 0$ we can find $c\in\rrn$ satisfying
\[
p=\frac{c}{H_K}-\frac{1}{H_K}\,N_K(p), \quad \text{for any }p\in\Sg.
\]
From here we infer that $\Sg\subseteq (c/H_K)-(1/H_K)\,(\ptl K)$, as we claimed.
\end{proof}

We finish this section with the notions of  \emph{anisotropic area} and \emph{algebraic volume} for a two-sided hypersurface $\Sg$ with unit normal $N$. If $d\Sg$ denotes the area element of $\Sg$, then $A_K(\Sg)$ is given by
\begin{equation}
\label{eq:area}
A_K(\Sg):=\int_\Sg\var_K\,d\Sg=\int_\Sg h_K(N)\,d\Sg.
\end{equation}
This coincides with the Euclidean area of $\Sg$ when $K$ is the round unit ball about $0$. When $K$ is not centrally symmetric about $0$, the value of $A_K(\Sg)$ may depend on the normal $N$ over $\Sg$. On the other hand, for a compact hypersurface $\Sg$, we follow Barbosa and do Carmo~\cite[Eq.~(2.2)]{bdc} to define
\begin{equation}
\label{eq:volume}
V(\Sg):=\frac{1}{n+1}\int_{\Sg}\escpr{p,N(p)}\,d\Sg.
\end{equation}
For $\Sg$ embedded and small enough, the divergence theorem ensures that $|V(\Sg)|$ equals the Lebesgue measure of the cone over $\Sg$ with vertex at $0$. If $\Sg$ is immersed then it is possible that $V(\Sg)=0$.

Consider a dilation $\delta_\la(p):=\la\,p$ with $\la>0$ and $p\in\rrn$. A unit normal $N_\la$ on the hypersurface $\delta_\la(\Sg)$ is determined by $N_\la(\delta_\la(p)):=N(p)$ for any $p\in\Sg$. Therefore, the change of variables formula and the fact that the Jacobian of the diffeomorphism $\delta_{\la|\Sg}:\Sg\to\delta_\la(\Sg)$ equals $\la^n$, entail that
\begin{equation}
\label{eq:dilarea}
\begin{split}
A_K(\delta_\la(\Sg))&=\la^n\,A_K(\Sg),
\\
V(\delta_\la(\Sg))&=\la^{n+1}\,V(\Sg).
\end{split}
\end{equation}
These identities will play a relevant role in the proof of our main result in Section~\ref{sec:main}.

\begin{remark}
\label{re:other}
It is usual to introduce the anisotropic area $A_F$ associated to an arbitrary function $F:\sph^n\to\rr^+$. This has the expression $A_F(\Sg):=\int_\Sg F(N)\,d\Sg$. Convexity assumptions on $F$ are necessary to deduce fine properties for $A_F$ and for the $F$-Laplacian, see for instance Maggi~\cite[Ch.~20]{maggi} or Palmer~\cite{palmer}. By extending $F$ as a 1-homogeneous function we get an asymmetric norm $\Phi$ in $\rrn$. The \emph{Wulff shape} for $\Phi$ is the convex body $\ww_\Phi$ supported by $\Phi$, see \cite[Eq.~(20.8)]{maggi}. As we remembered in the Introduction, $\ww_\Phi$ minimizes the anisotropic perimeter among sets of the same Euclidean volume. In our context we have $F=h_{K|\sph^n}$, $\Phi=h_K$ and $\ww_\Phi=K$. So, the initial convex body $K$ is the optimal shape for the anisotropic area $A_K$ defined from its support function $h_K$. In Palmer~\cite[Sect.~1]{palmer} the \emph{Wulff hypersurface} of $F$ is the strictly convex hypersurface defined as $\phi(\sph^n)$, where $\phi(w):=F(w)\,w+(\nabla_{\sph^n}F)(w)$. When $F=h_{K|\sph^n}$ it follows from equation~\eqref{eq:gradhK} that $\phi=\pi_K$ on $\sph^n$, so that the corresponding Wulff hypersurface $\phi(\sph^n)$ equals $\ptl K$. 
\end{remark}

\section{A second variation formula for the anisotropic area}
\label{sec:varform}
\noindent

In this section we compute the second derivative of the anisotropic area for certain deformations of an immersed hypersurface with non-empty boundary. The resulting formula will be employed in the proof of our main result in Theorem~\ref{th:main}. We begin with some preliminary definitions. 

A \emph{flow $($of diffeomorphisms$)$} in $\rrn$ is a smooth map $\phi:\rrn\times\rr\to\rrn$ such that $\phi(p,0)=p$ for any $p\in\rrn$, and the map $\phi_t:\rrn\to\rrn$ defined by $\phi_t(p):=\phi(p,t)$ is a diffeomorphism for any $t\in\rr$. Usually we will denote a flow by $\{\phi_t\}_{t\in\rr}$. The associated \emph{velocity vector field} is given by
\[
X(p):=\frac{d}{dt}\bigg|_{t=0}\phi_t(p),\quad\text{for any }p\in\rrn.
\]
When $Y$ is a smooth complete vector field on $\rrn$, the corresponding one-parameter group of diffeomorphisms $\{\phi_t\}_{t\in\rr}$ is a flow with velocity $Y$. 

For a hypersurface $\Sg$ immersed in $\rrn$ the \emph{variation of $\Sg$} induced by a flow $\{\phi_t\}_{t\in\rr}$ is the family $\{\Sg_t\}_{t\in\rr}$, where $\Sg_t:=\phi_t(\Sg)$ for any $t\in\rr$. The flow is \emph{compactly supported on} $\Sg$ if there is a compact set $C\subseteq\Sg$ such that $\phi_t(p)=p$ for any $p\in\Sg\setminus C$ and $t\in\rr$. If $\Sg$ is a two-sided hypersurface with unit normal $N$ then, along the variation $\{\Sg_t\}_{t\in\rr}$, we can find a smooth vector field $\ovn$ whose restriction to $\Sg_t$ provides a unit normal $N_t$ with $N_0=N$. For a fixed smooth strictly convex body $K\subset\rrn$ with support function $h_K$, the \emph{anisotropic area functional} for the variation $\{\Sg_t\}_{t\in\rr}$ is the function
\begin{equation}
\label{eq:areafunct}
A_K(t):=A_K(\Sg_t)=\int_{\Sg_t}h_K(N_t)\,d\Sg_t=\int_\Sg h_K(\ovn\circ\phi_t)\,\text{Jac}\,\phi_t\,d\Sg,
\end{equation}
where $\text{Jac}\,\phi_t$ is the Jacobian of the diffeomorphism $\phi_{t|\Sg}:\Sg\to\Sg_t$.

Our objective is to provide a useful expression of $A_K''(0)$ for some variations. For that we must compute $A_K'(0)$ first. We need some preliminary calculations that we gather below.

\begin{lemma}
\label{lem:auxiliar}
Let $K\subset\rrn$ be a smooth strictly convex body with support function $h_K$. For a two-sided hypersurface $\Sg$ immersed in $\rrn$ with unit normal $N$, the function $\var_K:=h_K(N)$ satisfies
\begin{equation}
\label{eq:gradvarK}
\nabla_\Sg\var_K=-B(N_K^\top),
\end{equation}
where $\nabla_\Sg$ is the gradient relative to $\Sg$ and $N_K^\top$ is the tangent projection of the anisotropic normal $N_K$. Moreover, for any smooth vector field $U$ with compact support on $\Sg$ and normal component $u:=\escpr{U,N}$, we have
\begin{equation}
\label{eq:divergence}
\int_\Sg\divv_\Sg(\var_K\,U)\,d\Sg=-\int_\Sg nH_K\,u\,d\Sg+\int_\Sg\escpr{N_K,\nabla_\Sg u}\,d\Sg-\int_{\ptl\Sg}\escpr{U,\nu_K}\,d(\ptl\Sg),
\end{equation}
where $H_K$ is the anisotropic mean curvature and $\nu_K$ is the anisotropic conormal.
\end{lemma}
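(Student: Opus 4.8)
The plan is to prove the two formulas separately, starting with \eqref{eq:gradvarK} since it feeds into the proof of \eqref{eq:divergence}.

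\textbf{Proof of \eqref{eq:gradvarK}.} Fix $p\in\Sg$ and $w\in T_p\Sg$. Since $\var_K=h_K\circ N$ and $h_K$ is smooth on $\rrn_*$, the chain rule gives $\escpr{\nabla_\Sg\var_K,w}=d(\var_K)_p(w)=(\nabla h_K)(N(p))\cdot(dN)_p(w)$. By the second identity in \eqref{eq:gradhK} we have $(\nabla h_K)(N(p))=\pi_K(N(p))=N_K(p)$, and by definition of the shape operator $(dN)_p(w)=D_wN=-B_p(w)$. Hence $\escpr{\nabla_\Sg\var_K,w}=-\escpr{N_K,B_p(w)}$. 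Since $B_p$ is self-adjoint on $T_p\Sg$, we may move it onto $N_K$; writing $N_K=N_K^\top+\escpr{N_K,N}\,N$ and using $B_p(w)\in T_p\Sg$, only the tangential part contributes, so $\escpr{N_K,B_p(w)}=\escpr{N_K^\top,B_p(w)}=\escpr{B_p(N_K^\top),w}$. As $w\in T_p\Sg$ was arbitrary, this yields $\nabla_\Sg\var_K=-B(N_K^\top)$.

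\textbf{Proof of \eqref{eq:divergence}.} I would expand the divergence by the Leibniz rule: $\divv_\Sg(\var_K\,U)=\var_K\,\divv_\Sg U+\escpr{\nabla_\Sg\var_K,U}$. Decompose $U=U^\top+u\,N$ with $U^\top$ tangent. For the first term, $\divv_\Sg U=\divv_\Sg U^\top+u\,\divv_\Sg N=\divv_\Sg U^\top-nH\,u$, but it is cleaner to keep $\var_K\,\divv_\Sg U$ intact and instead write $\divv_\Sg(\var_K U)=\divv_\Sg(\var_K U^\top)+\divv_\Sg(\var_K u\,N)$, and note $\divv_\Sg(\var_K u\,N)=\var_K u\,\divv_\Sg N+\escpr{\nabla_\Sg(\var_K u),N}=-\var_K u\,(nH)+0$ is not quite what we want either — the right move is to use the anisotropic normal directly. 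Observe that $\var_K\,U=\escpr{N_K,N}\,U$ and compare with $N_K$: since $\divv_\Sg N_K=-nH_K$ by \eqref{eq:mc}, I would show $\divv_\Sg(\var_K U)=\escpr{\nabla_\Sg u,N_K}-nH_K\,u+\divv_\Sg\big((\var_K U)^\top-\text{(tangential part of } u N_K)\big)$ by a direct computation expanding both sides in the orthonormal frame $\{e_1,\dots,e_n\}$ of $T_p\Sg$: namely $\divv_\Sg(\var_K U)=\sum_i\escpr{D_{e_i}(\var_K U),e_i}$, and $\escpr{\nabla_\Sg u,N_K}-nH_K u=\sum_i\escpr{D_{e_i}(uN_K),e_i}$ (using $\divv_\Sg N_K=-nH_K$), so the difference $\divv_\Sg(\var_K U)-\big(\escpr{\nabla_\Sg u,N_K}-nH_K u\big)=\divv_\Sg(\var_K U-uN_K)$; since $u=\escpr{U,N}=\escpr{U^\top+uN,N}$ and $\var_K=\escpr{N_K,N}$, the vector $\var_K U-uN_K$ has vanishing $N$-component, hence equals its own tangential projection $W:=(\var_K U-uN_K)^\top$, a tangent vector field with compact support on $\Sg$ (note $N_K^\top=-\nabla_\Sg\var_K$ is smooth). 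Then the Euclidean divergence theorem on $\Sg$ gives $\int_\Sg\divv_\Sg W\,d\Sg=-\int_{\ptl\Sg}\escpr{W,\nu}\,d(\ptl\Sg)$ with $\nu$ the inner conormal. Finally I would identify $\escpr{W,\nu}=\escpr{\var_K U-uN_K,\nu}=\var_K\escpr{U,\nu}-\escpr{N_K,\nu}\,u=\var_K\escpr{U,\nu}-\escpr{N_K,\nu}\escpr{U,N}=\escpr{U,\var_K\nu-\escpr{N_K,\nu}N}=\escpr{U,\nu_K}$ by definition \eqref{eq:conormal} of the anisotropic conormal. Combining the three displays yields exactly \eqref{eq:divergence}.

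\textbf{Main obstacle.} The only delicate point is the bookkeeping in the frame computation showing $\divv_\Sg(\var_K U)-\big(\escpr{\nabla_\Sg u,N_K}-nH_K u\big)=\divv_\Sg(\var_K U-uN_K)$; one must be careful that $N_K$ is not tangent to $\Sg$, so $\divv_\Sg N_K$ means $\sum_i\escpr{D_{e_i}N_K,e_i}$ and the $N$-component of $D_{e_i}N_K$ is irrelevant, which is precisely why \eqref{eq:mc} applies. Everything else is the Leibniz rule and the classical divergence theorem, so I expect the proof to be short.
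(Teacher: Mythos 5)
Your argument is correct and takes essentially the same route as the paper: the gradient formula is obtained by the identical chain-rule and self-adjointness computation, and for \eqref{eq:divergence} the paper uses the same decomposition $\var_K\,U=\var_K\,U^\top+u\,N_K-u\,N_K^\top$ (your tangent field $W=\var_K\,U-u\,N_K$ is exactly $\var_K\,U^\top-u\,N_K^\top$), followed by \eqref{eq:nkn}, \eqref{eq:mc}, the divergence theorem and \eqref{eq:conormal}. Only your parenthetical claim ``$N_K^\top=-\nabla_\Sg\var_K$'' is not an identity (the correct relation is $\nabla_\Sg\var_K=-B(N_K^\top)$), but this is harmless since the smoothness and compact support of $W$ already follow from $N_K=\pi_K\circ N$ and the compact support of $U$.
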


\begin{proof}
Take a point $p\in\Sg$ and a vector $w\in T_p\Sg$. By using \eqref{eq:gradhK}, \eqref{eq:normal} and that the shape operator $B_p$ is a self-adjoint endomorphism of $T_p\Sg$, we obtain 
\begin{equation}
\label{eq:topacio}
\begin{split}
\escpr{(\nabla_\Sg\var_K)(p),w}&=\escpr{(\nabla h_K)(N(p)),D_wN}=-\escpr{\pi_K(N(p)),B_p(w)}
\\
&=-\escpr{N_K(p),B_p(w)}=-\escpr{B_p(N^\top_K(p)),w}.
\end{split}
\end{equation}
This implies \eqref{eq:gradvarK}. Let us prove \eqref{eq:divergence}. For any $w\in\rrn$ we denote by $w^\top$ and $w^\bot$ the projections of $w$ with respect to $T_p\Sg$ and $(T_p\Sg)^\bot$, respectively. For a smooth vector field $U$ on $\Sg$, note that
\[
\var_K\,U=\var_K\,U^\top+\var_K\,u\,N=\var_K\,U^\top+u\,N_K^\bot=\var_K\,U^\top+u\,N_K-u\,N_K^\top,
\]
where in the second equality we have employed \eqref{eq:nkn}. By taking divergences relative to $\Sg$ and having in mind \eqref{eq:mc}, we get
\[
\divv_\Sg(\var_K\,U)=\divv_\Sg(\var_K\,U^\top)-nH_K\,u+\escpr{N_K,\nabla_\Sg u}-\divv_\Sg(u\,N_K^\top).
\]
The desired formula follows by using \eqref{eq:conormal} and the divergence theorem on $\Sg$. 
\end{proof}

Next, we compute the first variation of $A_K$. This was previously derived by many authors, see for instance Clarenz~\cite[Sect.~1]{clarenz-2002}, or Koiso and Palmer~\cite[Proof of Prop.~3.1]{koiso-palmer2}. As in Koiso~\cite[Lem.~9.1]{koiso-2019}, our formula holds for arbitrary deformations of a Euclidean hypersurface with non-empty boundary. We include a short proof that will be helpful in the subsequent calculus of $A_K''(0)$.

\begin{proposition}
\label{prop:1starea}
Let $K\subset\rrn$ be a smooth strictly convex body, $\Sg$ a two-sided immersed hypersurface with boundary, and $\{\phi_t\}_{t\in\rr}$ a flow in $\rrn$ with compact support on $\Sg$. Then, we have
\[
A_K'(0)=-\int_\Sg nH_K\,u\,d\Sg-\int_{\ptl\Sg}\escpr{X,\nu_K}\,d(\ptl\Sg),
\]
where $H_K$ is the anisotropic mean curvature, $u:=\escpr{X,N}$ is the normal component of the velocity vector field $X$, and $\nu_K$ is the anisotropic conormal along $\ptl\Sg$.
\end{proposition}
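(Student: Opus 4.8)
The plan is to differentiate the anisotropic area functional $A_K(t)=\int_\Sg h_K(\ovn\circ\phi_t)\,\mathrm{Jac}\,\phi_t\,d\Sg$ once and evaluate at $t=0$, using the pointwise-variation formulation already set up in \eqref{eq:areafunct}. First I would recall the two standard infinitesimal data at $t=0$: the derivative of the Jacobian is $\frac{d}{dt}\big|_{t=0}\mathrm{Jac}\,\phi_t=\divv_\Sg X$ (where $X$ is the velocity field, and $\divv_\Sg X$ is the full tangential divergence, built from both the tangent part $X^\top$ and the normal part $uN$ via $\divv_\Sg(uN)=-nHu$ in the isotropic sense — but here I keep it as $\divv_\Sg X$), and the derivative of the normal $\frac{d}{dt}\big|_{t=0}(\ovn\circ\phi_t)=:\dot N$, which is a tangent vector field on $\Sg$ satisfying the well-known identity $\dot N=-\nabla_\Sg u-B(X^\top)$ (this follows from differentiating $|\ovn|^2=1$ and $\escpr{\ovn,d\phi_t(w)}=0$).

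Next I would carry out the chain rule on the integrand. Differentiating $h_K(\ovn\circ\phi_t)$ gives $\escpr{(\nabla h_K)(N),\dot N}=\escpr{N_K,\dot N}$ by \eqref{eq:gradhK} and \eqref{eq:normal}. Substituting $\dot N=-\nabla_\Sg u-B(X^\top)$ and using \eqref{eq:gradvarK}, i.e. $\nabla_\Sg\var_K=-B(N_K^\top)$ together with self-adjointness of $B$, the term $-\escpr{N_K,B(X^\top)}=-\escpr{B(N_K^\top),X^\top}=\escpr{\nabla_\Sg\var_K,X^\top}$. Collecting everything, $A_K'(0)=\int_\Sg\big(-\escpr{N_K,\nabla_\Sg u}+\escpr{\nabla_\Sg\var_K,X^\top}+\var_K\,\divv_\Sg X\big)\,d\Sg$. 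I would then rewrite $\var_K\,\divv_\Sg X+\escpr{\nabla_\Sg\var_K,X}=\divv_\Sg(\var_K X)$ (the full field $X$, not just $X^\top$; the extra normal piece $\escpr{\nabla_\Sg\var_K,uN}=0$ since $\nabla_\Sg\var_K$ is tangent, so replacing $X^\top$ by $X$ is harmless), which lets me apply \eqref{eq:divergence} from Lemma~\ref{lem:auxiliar} directly with $U=X$.

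Applying \eqref{eq:divergence} then replaces $\int_\Sg\divv_\Sg(\var_K X)\,d\Sg$ by $-\int_\Sg nH_K u\,d\Sg+\int_\Sg\escpr{N_K,\nabla_\Sg u}\,d\Sg-\int_{\ptl\Sg}\escpr{X,\nu_K}\,d(\ptl\Sg)$. The term $\int_\Sg\escpr{N_K,\nabla_\Sg u}\,d\Sg$ cancels exactly against the $-\int_\Sg\escpr{N_K,\nabla_\Sg u}\,d\Sg$ produced by the chain rule, and we are left precisely with $A_K'(0)=-\int_\Sg nH_K u\,d\Sg-\int_{\ptl\Sg}\escpr{X,\nu_K}\,d(\ptl\Sg)$, as claimed.

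I expect the only delicate point to be the careful bookkeeping of tangential versus normal components — in particular justifying $\dot N=-\nabla_\Sg u-B(X^\top)$ for a genuinely immersed hypersurface with boundary (where $\ovn$ is only a local extension), and making sure the divergence theorem on $\Sg$ is legitimately applied given that $\{\phi_t\}$ is compactly supported on $\Sg$ so that $\var_K X$ has compact support and no hidden boundary contributions from $\partial\Sg$ are lost. Everything else is a routine, if somewhat intricate, chain-rule computation, and the algebra is engineered so that the two $\escpr{N_K,\nabla_\Sg u}$ terms annihilate.
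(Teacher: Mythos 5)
Your argument is correct and is essentially the same as the paper's: differentiation under the integral sign with $\frac{d}{dt}\big|_{t=0}\mathrm{Jac}\,\phi_t=\divv_\Sg X$ and $D_X\ovn=-\nabla_\Sg u-B(X^\top)$, the identity $\escpr{N_K,B(X^\top)}=\escpr{B(N_K^\top),X^\top}=-\escpr{\nabla_\Sg\var_K,X^\top}$, collection of terms into $-\escpr{N_K,\nabla_\Sg u}+\divv_\Sg(\var_K X)$, and then Lemma~\ref{lem:auxiliar} with $U=X$, exactly as in the paper. The cancellation of the two $\escpr{N_K,\nabla_\Sg u}$ integrals and the retention of the boundary term $-\int_{\ptl\Sg}\escpr{X,\nu_K}\,d(\ptl\Sg)$ are handled just as in the original proof, so nothing further is needed.
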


\begin{proof}
Recall that we denote $\var_K:=h_K(N)$. For a fixed point $p\in\Sg$, we define
\begin{equation}
\label{eq:hyj}
h_p(t):=h_K(N_t\circ\phi_t)(p), \quad j_p(t):=(\text{Jac}\,\phi_t)(p), \quad\text{for any }t\in\rr.
\end{equation}
By differentiating under the integral sign in \eqref{eq:areafunct} and taking into account that $\phi_0=\text{Id}$, we obtain
\begin{equation}
\label{eq:1staK}
A_K'(0)=\int_\Sg\big(h_p'(0)+\var_K(p)\,j_p'(0)\big)\,d\Sg.
\end{equation}
On the one hand it is well known, see Simon~\cite[\S9]{simon}, that
\begin{equation}
\label{eq:1stj}
j_p'(0)=\frac{d}{dt}\bigg|_{t=0}(\text{Jac}\,\phi_t)(p)=(\divv_\Sg X)(p).
\end{equation}
On the other hand, from equations~\eqref{eq:gradhK} and \eqref{eq:normal}, it follows that
\begin{align}
h_p'(0)=\escpr{(\nabla h_K)(N(p)),D_{X(p)}\ovn}=\escpr{N_K(p),D_{X(p)}\ovn}.
\end{align}
The computation of $D_{X(p)}\ovn$ is found in \cite[Lem.~4.1(1)]{ros-souam}. We get
\begin{equation}
\label{eq:dxn}
D_{X(p)}\ovn=\frac{d}{dt}\bigg|_{t=0}(N_t\circ\phi_t)(p)=-(\nabla_\Sg u)(p)-B_p(X^\top(p)),
\end{equation}
where $X^\top(p)$ is the projection of $X(p)$ onto $T_p\Sg$. By substituting this information into the previous expression for $h_p'(0)$ and having in mind the third equality in \eqref{eq:topacio}, we arrive at
\begin{equation}
\label{eq:1sth}
h_p'(0)=-\escpr{N_K,\nabla_\Sg u}(p)+\escpr{\nabla_\Sg\var_K,X^\top}(p).
\end{equation}
Thus, the integrand in \eqref{eq:1staK} is the evaluation at $p$ of the function
\[
-\escpr{N_K,\nabla_\Sg u}+\escpr{\nabla_\Sg\var_K,X^\top}+\var_K\,\divv_\Sg X=-\escpr{N_K,\nabla_\Sg u}+\divv_\Sg(\var_K\,X).
\]
The proof finishes by applying the formula in \eqref{eq:divergence} with $U=X$.
\end{proof}

Second variation formulas for the anisotropic area under different hypotheses on $\Sg$ and the deformation can be found in Koiso and Palmer~\cite[Prop.~3.3]{koiso-palmer2}, \cite[Prop.~3.3]{koiso-palmer4}, Barbosa and Silva~\cite[Prop.~3]{barbosa-silva}, and Guo and Xia~\cite[Prop.~3.5]{guo-xia}. In this work we only need to compute $A_K''(0)$ when we move a hypersurface $\Sg$ with non-empty boundary by means of some special flows. 

\begin{proposition}
\label{prop:2ndarea}
Let $K\subset\rrn$ be a smooth strictly convex body, $\Sg$ a compact two-sided immersed hypersurface with boundary, and $X$ a smooth complete vector field on $\rrn$ such that $X_{|\Sg}=N_K$. Then, for the one-parameter group of diffeomorphisms $\{\phi_t\}_{t\in\rr}$ associated to $X$, we have
\[
A_K''(0)=\int_\Sg\left(n^2H_K^2-\emph{tr}(B_K^2)\right)\var_K\,d\Sg-\int_\Sg n H_K\,v\,d\Sg-\int_{\ptl\Sg}\escpr{Z,\nu_K}\,d(\ptl\Sg),
\]
where $H_K$ is the anisotropic mean curvature, $B_K$ is the anisotropic shape operator, $\nu_K$ is the anisotropic conormal and $v:=\escpr{Z,N}$ is the normal component of the vector field $Z:=D_XX$.
\end{proposition}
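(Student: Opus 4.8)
The plan is to obtain $A_K''(0)$ by differentiating the first variation formula of Proposition~\ref{prop:1starea}, rather than by expanding $A_K(t)=\int_\Sg h_p(t)\,j_p(t)\,d\Sg$ term by term; the direct expansion would require the second derivative of the \emph{Euclidean} shape operator of $\Sg_t$, and differentiating Proposition~\ref{prop:1starea} sidesteps this. Since $\Sg$ is compact and $X$ is complete, for each $t$ the variation $\{\Sg_{t+s}\}_s$ of the compact hypersurface $\Sg_t$ is induced by the flow $\{\phi_s\}_s$ (because $\phi_{t+s}=\phi_s\circ\phi_t$), whose velocity is again $X$; applying Proposition~\ref{prop:1starea} to $\Sg_t$ with this flow gives
\[
A_K'(t)=-\int_{\Sg_t}nH_K^t\,u_t\,d\Sg_t-\int_{\ptl\Sg_t}\escpr{X,\nu_K^t}\,d(\ptl\Sg_t),\qquad u_t:=\escpr{X,N_t},
\]
where $H_K^t$ and $\nu_K^t$ are the anisotropic mean curvature and conormal of $\Sg_t$ relative to $N_t$, and it remains to differentiate this at $t=0$.

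The observation that makes everything work is a vanishing. Since $X_{|\Sg}=N_K$, the normal component of $X$ along $\Sg$ is $u_0=\escpr{N_K,N}=\var_K$ by \eqref{eq:nkn} and its tangent part is $N_K^\top$, so substituting in \eqref{eq:dxn} and using \eqref{eq:gradvarK} gives $D_{X(p)}\ovn=-\nabla_\Sg\var_K-B_p(N_K^\top)=0$ at every $p\in\Sg$. I will use this through three consequences: (a) $\tfrac{d}{dt}\big|_0(N_K^t\circ\phi_t)=(d\pi_K)_N(D_X\ovn)=0$; (b) $\tfrac{d}{dt}\big|_0(u_t\circ\phi_t)=\escpr{D_XX,N}+\escpr{X,D_X\ovn}=\escpr{Z,N}=v$ on $\Sg$; and (c) the $\rrn$-valued field $\ovn_K:=\pi_K\circ\ovn$ on the neighbourhood $\bigcup_s\Sg_s$ satisfies $D_X\ovn_K=0$ along $\Sg$. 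For the boundary term I note that at $t=0$ its integrand equals $\escpr{N_K,\nu_K}=0$, so the variation of the boundary volume element contributes nothing upon differentiation; what survives is $\escpr{D_XX,\nu_K}+\escpr{X,\tfrac{d}{dt}\big|_0(\nu_K^t\circ\phi_t)}$, and differentiating $\escpr{N_K^t,\nu_K^t}=0$ together with (a) forces $\escpr{N_K,\tfrac{d}{dt}\big|_0(\nu_K^t\circ\phi_t)}=0$, so the boundary term contributes exactly $-\int_{\ptl\Sg}\escpr{Z,\nu_K}\,d(\ptl\Sg)$.

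For the bulk term, pulling $\int_{\Sg_t}nH_K^t\,u_t\,d\Sg_t$ back by $\phi_t$ and differentiating at $0$ — using (b), the identity $\tfrac{d}{dt}\big|_0\text{Jac}\,\phi_t=\divv_\Sg X=\divv_\Sg N_K=-nH_K$ from \eqref{eq:1stj} and \eqref{eq:mc}, and the abbreviation $\dot H_K:=\tfrac{d}{dt}\big|_0(H_K^t\circ\phi_t)$ — shows this term contributes $-\int_\Sg\big(n\dot H_K\,\var_K+nH_K\,v-n^2H_K^2\,\var_K\big)\,d\Sg$. The crux is then to compute $\dot H_K$, and the cleanest route seems to be: at an interior point of $\Sg$ write $nH_K^t=-\divv_{\Sg_t}\ovn_K=-\divv_{\rrn}\ovn_K+\escpr{D_{N_t}\ovn_K,N_t}$ on $\Sg_t$, pull this back and differentiate at $0$. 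Because $\tfrac{d}{dt}\big|_0(\ovn\circ\phi_t)=D_X\ovn=0$, only the motion of the base point contributes, and after expanding $\divv_{\rrn}\ovn_K=\text{tr}(D\ovn_K)$ in an orthonormal frame $\{e_1,\dots,e_n,N\}$ the two terms of the form $\escpr{(D^2\ovn_K)(X,N),N}$ cancel, leaving $n\dot H_K=-\sum_i\escpr{(D^2\ovn_K)(X,e_i),e_i}$. Finally $(D^2\ovn_K)(X,e_i)=D_{e_i}(D_X\ovn_K)-D_{D_{e_i}X}\ovn_K$: the first term is $0$ by (c), while $D_{e_i}X=D_{e_i}N_K=-B_K(e_i)$ is tangent and $D_w\ovn_K=D_wN_K=-B_K(w)$ for tangent $w$ by \eqref{eq:shape}, so $(D^2\ovn_K)(X,e_i)=-B_K^2(e_i)$ and $n\dot H_K=\text{tr}(B_K^2)$ pointwise.

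Substituting $n\dot H_K=\text{tr}(B_K^2)$ into the bulk contribution and adding the boundary contribution yields exactly the claimed formula for $A_K''(0)$. I expect the main obstacle to be this computation of the linearization of $H_K^t$ along a variation that is transverse but not normal to $\Sg$; the key simplification throughout is $D_X\ovn=0$ on $\Sg$, which both annihilates the position-derivative-of-divergence terms and collapses every surviving derivative of $\ovn_K$ restricted to $\Sg$ to the anisotropic shape operator via \eqref{eq:shape}.
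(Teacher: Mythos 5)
Your argument is correct, and it takes a genuinely different route from the paper. The paper expands $A_K(t)=\int_\Sg h_p(t)\,j_p(t)\,d\Sg$ directly: it uses Simon's formula for $j_p''(0)$, a separate lemma computing $D_XD_X\ovn$, and then an integration by parts (the divergence identity \eqref{eq:divergence} with $U=Z$) to produce the boundary term $-\int_{\ptl\Sg}\escpr{Z,\nu_K}\,d(\ptl\Sg)$. You instead exploit the group property $\phi_{t+s}=\phi_s\circ\phi_t$ to apply Proposition~\ref{prop:1starea} on each $\Sg_t$ and differentiate the resulting first variation at $t=0$; the boundary term then appears directly (your observation that $\escpr{X,\nu_K}=\escpr{N_K,\nu_K}=0$ at $t=0$ kills the boundary Jacobian term, and differentiating $\escpr{N_K^t,\nu_K^t}\equiv 0$ together with $\tfrac{d}{dt}\big|_0(N_K^t\circ\phi_t)=0$ kills the $\nu_K^t$-derivative term), so no integration by parts or acceleration lemma is needed. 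The whole weight shifts to the pointwise linearization $n\,\tfrac{d}{dt}\big|_0(H_K^t\circ\phi_t)=\text{tr}(B_K^2)$, and your derivation of it is sound: the cancellation of the two $\escpr{(D^2\ovn_K)(X,N),N}$ terms, the vanishing $D_{e_i}(D_X\ovn_K)=0$ (legitimate because $D_X\ovn_K$ vanishes at every point of $\Sg$ and $e_i$ is tangent), and $D_{D_{e_i}X}\ovn_K=B_K^2(e_i)$ all check out, and the identity is confirmed by the isotropic sphere and by an independent computation with a frame tangent to the leaves $\Sg_t$. What each approach buys: the paper's avoids any evolution equation for $H_K$, staying at the level of Jacobians and of $h_K(N_t\circ\phi_t)$; yours avoids Simon's $j_p''(0)$ formula, Lemma~\ref{lem:potato} and the divergence step, at the price of needing $\ovn_K=\pi_K\circ\ovn$ (and its ambient divergence and Hessian) to make sense near $\Sg$ — this uses the transversality $\escpr{X,N}=\var_K>0$ so the flow sweeps out a neighbourhood (locally, via the pullback for merely immersed $\Sg$, and at $\ptl\Sg$ by continuity of the pointwise identity), which is the same level of rigor as the paper's construction of the fields $E_i$ in Lemma~\ref{lem:potato}, but deserves a sentence; likewise the symmetry $(D^2\ovn_K)(X,e_i)=D_{e_i}(D_X\ovn_K)-D_{D_{e_i}X}\ovn_K$ silently uses flatness of $\rrn$.
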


\begin{proof}
For any $w\in\rrn$ the notations $w^\top$ and $w^\bot$ will stand for the projections of $w$ onto $T\Sg$ and $(T\Sg)^\bot$, respectively. For a given point $p\in\Sg$ we define $h_p(t)$ and $j_p(t)$ as in \eqref{eq:hyj}. By differentiating under the integral sign twice in \eqref{eq:areafunct} and taking into account that $\phi_0=\text{Id}$, we get
\begin{equation}
\label{eq:2ndaK}
A_K'(0)=\int_\Sg\big(h_p''(0)+2h_p'(0)\,j_p'(0)+\var_K(p)\,j_p''(0)\big)\,d\Sg.
\end{equation}
Let us compute all the derivatives in the integrand above.

For the calculus of $j_p'(0)$ and $j_p''(0)$ we refer the reader to Simon~\cite[\S9]{simon}. From \eqref{eq:1stj}, \eqref{eq:mc}, and the fact that $X_{|\Sg}=N_K$, we obtain
\begin{equation}
\label{eq:jprima}
j_p'(0)=(\divv_\Sg X)(p)=(\divv_\Sg N_K)(p)=-nH_K(p).
\end{equation}
On the other hand, we have
\[
j_p''(0)=(\divv_\Sg Z)(p)+(\divv_\Sg X)^2(p)+\sum_{i=1}^n|(D_{e_i}X)^\bot|^2-\sum_{i,j=1}^n\escpr{D_{e_i}X,e_j}\,\escpr{D_{e_j}X,e_i},
\]
where $\{e_1,\ldots,e_n\}$ is an orthonormal basis of $T_p\Sg$. Equation~\eqref{eq:shape} and the fact that $X_{|\Sg}=N_K$ yield 
\[
D_{e_i}X=D_{e_i}N_K=-(B_K)_p(e_i),
\] 
so that $(D_{e_i}X)^\bot=0$ for any $i=1,\ldots,n$. It is also clear that
\[
\sum_{i,j=1}^n\escpr{D_{e_i}X,e_j}\,\escpr{D_{e_j}X,e_i}=
\text{tr}\big((B_K)^2_p\big).
\]
All this together with \eqref{eq:jprima} shows that
\begin{equation}
\label{eq:j2prima}
j_p''(0)=\big(\!\divv_\Sg Z+n^2H_K^2-\text{tr}(B_K^2)\big)(p).
\end{equation}

Next, we compute $h_p'(0)$ and $h_p''(0)$. Note that $\escpr{X,N}=\escpr{N_K,N}=\var_K$ by \eqref{eq:nkn}. Hence, equation~\eqref{eq:1sth} implies that
\begin{equation}
\label{eq:hprima}
h_p'(0)=-\escpr{N_K,\nabla_\Sg\var_K}(p)+\escpr{\nabla_\Sg\var_K,N_K^\top}(p)=0.
\end{equation}
In order to calculate $h_p''(0)$ we need an expression for $h_p'(t)$. From the definition in \eqref{eq:hyj} we deduce  
\[
h_p'(t)=\escpr{(\nabla h_K)((N_t\circ\phi_t)(p)),(D_{X}\ovn)(\phi_t(p))}=\escpr{\pi_K((N_t\circ\phi_t)(p)),(D_{X}\ovn)(\phi_t(p))}
\]
because of equation~\eqref{eq:gradhK}. This entails that
\[
h_p''(0)=\escpr{(d\pi_K)_{N(p)}(D_{X(p)}\ovn),D_{X(p)}\ovn}+\escpr{N_K(p),D_{X(p)}D_X\ovn}.
\]
As $\escpr{X,N}=\var_K$ on $\Sg$, equations \eqref{eq:dxn} and \eqref{eq:gradvarK} lead to
\begin{equation}
\label{eq:tente}
D_{X(p)}\ovn=-(\nabla_\Sg\var_K)(p)-B_p(N_K^\top(p))=0.
\end{equation}
On the other hand, by Lemma~\ref{lem:potato} below we know that
\[
D_{X(p)}D_X\ovn=-(\nabla_\Sg v)(p)-B_p(Z^\top(p)),
\]
and so
\begin{equation}
\label{eq:h2prima}
h_p''(0)=-\escpr{N_K,\nabla_\Sg v}(p)-\escpr{N_K(p),B_p(Z^\top(p))}
=-\escpr{N_K,\nabla_\Sg v}(p)+\escpr{\nabla_\Sg\var_K,Z^\top}(p),
\end{equation}
where we have employed the third equality in equation~\eqref{eq:topacio}.

Now, by having in mind \eqref{eq:h2prima}, \eqref{eq:hprima} and \eqref{eq:j2prima}, we conclude that the integrand in \eqref{eq:2ndaK} is the evaluation at $p$ of the function
\[
-\escpr{N_K,\nabla_\Sg v}+\divv_\Sg(\var_K\,Z)+\big(n^2H^2_K-\text{tr}(B_K^2)\big)\,\var_K.
\]
From here the the proof finishes by applying the formula in \eqref{eq:divergence} with $U=Z$.
\end{proof}

\begin{lemma}
\label{lem:potato}
In the conditions of Proposition~\ref{prop:2ndarea}, for any $p\in\Sg$, we have
\[
D_{X(p)}D_X\ovn=-(\nabla_\Sg v)(p)-B_p(Z^\top(p)).
\]
\end{lemma}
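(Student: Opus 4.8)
The plan is to differentiate twice along the flow line through $p$ and to exploit the orthogonality between $\ovn$ and the tangent spaces of the moving hypersurface. Set $\xi(t):=\phi_t(p)$ and $F(t):=\ovn(\xi(t))$, which equals $N_t(\xi(t))$ since $\ovn_{|\Sg_t}=N_t$. Because $\{\phi_t\}_{t\in\rr}$ is the one-parameter group of $X$, we have $\xi'(t)=X(\xi(t))$, hence $\xi''(0)=D_{X(p)}X=Z(p)$, and moreover $\tfrac{\ptl^2}{\ptl t^2}\big|_{t=0}\phi_t(q)=D_{X(q)}X=Z(q)$ for every $q$. From $F'(t)=(D_X\ovn)(\xi(t))$ one gets $F''(0)=D_{X(p)}(D_X\ovn)$, so it suffices to compute $F''(0)$.

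Next I would fix $w\in T_p\Sg$, pick a curve $\gamma$ in $\Sg$ with $\gamma(0)=p$ and $\gamma'(0)=w$, and set $W(t):=(d\phi_t)_p(w)=\tfrac{\ptl}{\ptl s}\big|_{s=0}\phi_t(\gamma(s))$, which lies in $T_{\xi(t)}\Sg_t$. Then both $t\mapsto\escpr{F(t),W(t)}$ and $t\mapsto|F(t)|^2-1$ vanish identically, the first because $\ovn$ is normal to $\Sg_t$ along $\Sg_t$, the second because $N_t$ is a unit field. Recall from \eqref{eq:dxn} and \eqref{eq:gradvarK} that $D_{X(p)}\ovn=-(\nabla_\Sg\var_K)(p)-B_p(N_K^\top(p))=0$, since $X_{|\Sg}=N_K$, i.e. $F'(0)=0$. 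Differentiating the two identities twice at $t=0$ and using $F'(0)=0$, $F(0)=N(p)$, $W(0)=w$, I obtain $\escpr{F''(0),N(p)}=0$ and $\escpr{F''(0),w}=-\escpr{N(p),W''(0)}$. Thus $F''(0)$ is tangent to $\Sg$ at $p$, and it only remains to identify the numbers $\escpr{N(p),W''(0)}$ for $w\in T_p\Sg$.

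Interchanging the $s$- and $t$-derivatives in $W(t)=\tfrac{\ptl}{\ptl s}\big|_{s=0}\phi_t(\gamma(s))$ and using $\tfrac{\ptl^2}{\ptl t^2}\big|_{t=0}\phi_t(q)=Z(q)$, one finds $W''(0)=\tfrac{\ptl}{\ptl s}\big|_{s=0}Z(\gamma(s))=D_wZ$. Now write $Z=Z^\top+vN$ along $\Sg$, where $v=\escpr{Z,N}$ and $Z^\top$ is the tangent part, and differentiate in the direction $w$: the term $v\,D_wN$ is tangent and drops out when pairing with $N$, the term $D_w(vN)$ contributes $\escpr{\nabla_\Sg v(p),w}$ because $\escpr{N,D_wN}=0$, and differentiating $\escpr{N,Z^\top}\equiv 0$ on $\Sg$ together with $D_wN=-B_p(w)$ gives $\escpr{N,D_wZ^\top}=\escpr{B_p(w),Z^\top(p)}=\escpr{w,B_p(Z^\top(p))}$ by self-adjointness of $B_p$. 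Hence $\escpr{N(p),D_wZ}=\escpr{\nabla_\Sg v(p)+B_p(Z^\top(p)),w}$ for all $w\in T_p\Sg$. Combining this with the two facts from the previous paragraph, $\escpr{F''(0),w}=-\escpr{\nabla_\Sg v(p)+B_p(Z^\top(p)),w}$ for all $w\in T_p\Sg$ and $F''(0)\perp N(p)$, so $D_{X(p)}D_X\ovn=F''(0)=-(\nabla_\Sg v)(p)-B_p(Z^\top(p))$, as claimed.

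All of these are routine computations once the framework is fixed; the only point requiring a little care is the middle one, namely recognizing that the second-order behavior of the flow along tangential directions is encoded by $D_wZ$ (equivalently $W''(0)=D_wZ$), and keeping track of which objects ($X,Z,\ovn$) are defined on all of $\rrn$ and which ($N,B_p,\nabla_\Sg,v$) only on $\Sg$, so that every derivative that appears is legitimate. As an aside, the identity is formally \eqref{eq:dxn} with $Z$, $v$ in place of $X$, $u$, but the direct argument above seems the most transparent route.
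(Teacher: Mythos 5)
Your argument is correct and is essentially the paper's proof in a slightly different dress: where you transport a single tangent vector by the flow, set $W(t)=(d\phi_t)_p(w)$, and twice differentiate $\escpr{F,W}=0$ and $|F|^2=1$, the paper uses flow-invariant tangent frames $E_i$ with $[X,E_i]=0$ and differentiates $\escpr{\ovn,E_i}=0$ and $\escpr{D_X\ovn,\ovn}=0$, which is the same computation. Your use of the symmetry of mixed partials of the flow map to get $W''(0)=D_wZ$ plays exactly the role of the paper's appeal to the vanishing Riemann tensor to write $D_XD_{E_i}X=D_{E_i}Z$, and the remaining identification of $\escpr{N,D_wZ}$ with $\escpr{\nabla_\Sg v+B_p(Z^\top),w}$ coincides with the paper's final step.
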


\begin{proof}
Take an orthonormal basis $\{e_1,\ldots,e_n\}$ of $T_p\Sg$. We use the flow $\{\phi_t\}_{t\in\rr}$ associated to $X$ to construct, for any $i=1,\ldots,n$, a smooth vector field $E_i$ around $p$ which is tangent on any $\Sg_t=\phi_t(\Sg)$ while satisfying $E_i(p)=e_i$ and $[X,E_i]=[\ovn,E_i]=0$ (here $[\cdot\,,\cdot]$ stands for the Lie bracket of vector fields in $\rrn$). It is clear that
\begin{equation}
\label{eq:gen0}
D_{X(p)}D_X\ovn=\sum_{i=1}^n\escpr{D_{X(p)}D_X\ovn,e_i}\,e_i
+\escpr{D_{X(p)}D_X\ovn,N(p)}\,N(p).
\end{equation}
We will compute the different terms in the previous equation.

Since $|\ovn|^2=1$ we get $\escpr{D_X\ovn,\ovn}=0$. By differentiating and applying equation \eqref{eq:tente}, we deduce
\begin{equation}
\label{eq:gen1}
0=\escpr{D_{X(p)}D_X\ovn,N(p)}+|D_{X(p)}\ovn|^2=\escpr{D_{X(p)}D_X\ovn,N(p)}.
\end{equation}
On the other hand, we differentiate twice with respect to $X$ in equality $\escpr{\ovn,E_i}=0$. By taking into account that $D_{X(p)}\ovn=0$ and $[X,E_i]=0$, we obtain
\[
\escpr{D_{X(p)}D_X\ovn,e_i}=-\escpr{N(p),D_{X(p)}D_XE_i}=-\escpr{N(p),D_{X(p)}D_{E_i}X}.
\]
Since the Riemann curvature tensor vanishes for the standard metric in $\rrn$, we infer
\[
0=D_XD_{E_i}X-D_{E_i}D_XX-D_{[X,E_i]}X=D_XD_{E_i}X-D_{E_i}Z
\]
because $D_XX=Z$. This shows that $D_{X(p)}D_{E_i}X=D_{e_i}Z$. As a consequence
\begin{align}
\escpr{D_{X(p)}D_X\ovn,e_i}&=-\escpr{N(p),D_{e_i}Z}=-e_i\big(\escpr{Z,N}\big)+\escpr{Z(p),D_{e_i}N}
\\
&=-\escpr{(\nabla_\Sg v)(p),e_i}-\escpr{Z^\top(p),B_p(e_i)}
\\
\label{eq:gen2}
&=-\escpr{(\nabla_\Sg v)(p),e_i}-\escpr{B_p(Z^\top(p)),e_i}.
\end{align}
By substituting \eqref{eq:gen1} and \eqref{eq:gen2} into \eqref{eq:gen0} the proof follows.
\end{proof}

\begin{remarks}
(i). If $K\subset\rrn$ is the round unit ball about $0$, then the anisotropic area coincides with the Euclidean area and the obtained formulas are well known.

(ii). It is interesting to observe that, unless $K$ is centrally symmetric about $0$, the formulas for $A_K'(0)$ and $A_K''(0)$ may depend on the unit normal vector $N$ fixed on $\Sg$.

(iii). In Proposition~\ref{prop:2ndareavol} we will see that, under some extra conditions, the boundary integrand in the expression of $A_K''(0)$ has a geometric interpretation.
\end{remarks}

\section{Anisotropic stable hypersurfaces in solid cones}
\label{sec:main}
\setcounter{equation}{0}

In this section we consider a Euclidean solid cone and study compact hypersurfaces immersed in the cone and minimizing the anisotropic area up to second order for deformations preserving the volume of the hypersurface and the boundary of the cone. In this situation we will show that the variational formulas computed in Section~\ref{sec:varform} can be slightly simplified. After that we will prove our main theorem, where we characterize these second order minima when the cone is convex. It is worth mentioning that, excluding this classification statement, all the results in this section still hold when we replace the cone with any smooth Euclidean open set.    

We begin by introducing some notation and definitions. For a domain $\de\subset\sph^n$ with smooth boundary, the \emph{solid cone} over $\de$ is the set
\[
\cc:=\{\la\,p\,;\, \la>0,\, p\in\de\}.
\]
This is a domain of $\rrn$ with boundary $\ptl\cc$ smooth away from $0$. We call $\xi$ to the inner unit normal along $\ptl\cc\setminus\{0\}$. Note that $\cc$ and $\ptl\cc$ are invariant under the dilations $\delta_\la$ centered at $0$. It is also clear that $\cc$ coincides with an open half-space when $\de$ is an open hemisphere. 

Let $\Sg$ be a smooth, compact, two-sided hypersurface immersed in $\overline{\cc}$ with smooth boundary $\ptl\Sg$ in $\ptl\cc\setminus\{0\}$. We suppose that $\Sg\cap\ptl\cc=\ptl\Sg$, so that $\Sg\setminus\ptl\Sg\subseteq\cc$. We fix a smooth unit normal vector field $N$ on $\Sg$. The inner conormal vector of $\ptl\Sg$ in $\Sg$ is represented by $\nu$. In the planar distribution $T(\ptl\Sg)^\bot$ we choose the orientation induced by $\{\nu,N\}$. Thus, for any $p\in\ptl\Sg$, there is a unique $\mu(p)\in T_p(\ptl\Sg)^\bot$ such that $\{\xi(p),\mu(p)\}$ is a positively oriented orthonormal basis. Observe that $\mu$ is tangent to $\ptl\cc$ and normal to $\ptl\Sg$. It is easy to check that these equalities hold along $\ptl\Sg$
\begin{equation}
\label{eq:angles}
\begin{split}
\nu&=(\cos\theta)\,\xi-(\sin\theta)\,\mu, \qquad \mu=-(\sin\theta)\,\nu+(\cos\theta)\,N,
\\
N&=(\sin\theta)\,\xi+(\cos\theta)\,\mu,\qquad \xi=(\cos\theta)\,\nu+(\sin\theta)\,N,
\end{split}
\end{equation}
where $\theta$ is the oriented angle function between $\nu$ and $\xi$ in $T(\ptl\Sg)^\bot$. As a consequence, for a fixed smooth strictly convex body $K\subset\rrn$, the anisotropic normal $N_K$ on $\Sg$ and the anisotropic conormal $\nu_K$ along $\ptl\Sg$ given in \eqref{eq:normal} and \eqref{eq:conormal} verify
\begin{equation}
\label{eq:anangles}
\escpr{N_K,\mu}=\escpr{\nu_K,\xi},\qquad \escpr{N_K,\xi}=-\escpr{\nu_K,\mu}.
\end{equation}

A flow of diffeomorphisms $\{\phi_t\}_{t\in\rr}$ in $\rrn$ is \emph{admissible} for $\cc$ if $\phi_t(\ptl\cc)=\ptl\cc$ and $\phi_t(0)=0$, for any $t\in\rr$. The induced variation $\{\Sg_t\}_{t\in\rr}$ of $\Sg$ satisfies $\ptl\Sg_t\subset\ptl\cc\setminus\{0\}$ and $\Sg_t\cap\ptl\cc=\ptl\Sg_t$, for any $t\in\rr$. Moreover, the velocity vector field $X$ is tangent on $\ptl\cc\setminus\{0\}$ and vanishes at $0$. We consider the \emph{anisotropic area functional} $A_K(t)$ introduced in \eqref{eq:areafunct} and the \emph{volume functional} $V(t)$, which assigns to any $t\in\rr$ the \emph{algebraic volume enclosed} by $\Sg_t$ as defined in \eqref{eq:volume}. We computed $A_K'(0)$ in Proposition~\ref{prop:1starea}. On the other hand, 
it is well known (\cite[Eq.~(2.3)]{bdc}) that
\begin{equation}
\label{eq:1stvol}
V'(0)=\int_\Sg u\,d\Sg,
\end{equation}
where $u:=\escpr{X,N}$. We say that the flow preserves the volume of $\Sg$ if $V(t)$ is constant for any $t$ small enough. This implies that $\int_\Sg u\,d\Sg=0$. Conversely, for any smooth function $u:\Sg\to\rr$ with $\int_\Sg u\,d\Sg=0$, there is a flow $\{\phi_t\}_{t\in\rr}$ admissible for $\cc$, preserving the volume of $\Sg$, and such that $\escpr{X,N}=u$ on $\Sg$, see Barbosa and do Carmo~\cite[Lem.~(2.4)]{bdc}. 

A hypersurface $\Sg$ in the previous conditions is \emph{anisotropic stationary} if $A_K'(0)=0$ for any admissible flow for $\cc$ preserving the volume of $\Sg$. The first variational formulas in Proposition~\ref{prop:1starea} and equation \eqref{eq:1stvol}, together with the aforementioned construction of volume-preserving flows, lead to a characterization of anisotropic stationary hypersurfaces. Indeed, we can reason as Koiso and Palmer~\cite[Prop.~3.1]{koiso-palmer2} to deduce the next result, which generalizes the isotropic situation.

\begin{proposition}
\label{prop:varprop}
A two-sided hypersurface $\Sg$ immersed in $\overline{\cc}$ with boundary $\ptl\Sg$ in $\ptl\cc\setminus\{0\}$ is anisotropic stationary if and only if the anisotropic mean curvature $H_K$ is constant on $\Sg$ and $\escpr{N_K,\xi}=0$ along $\ptl\Sg$.
\end{proposition}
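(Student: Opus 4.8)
The plan is to characterize anisotropic stationary hypersurfaces by combining the first variation formula from Proposition~\ref{prop:1starea} with the construction of volume-preserving admissible flows, exactly in the spirit of Koiso and Palmer. First I would rewrite $A_K'(0)$ for an admissible flow. Since $X$ is tangent on $\ptl\cc\setminus\{0\}$, along $\ptl\Sg$ the velocity vector decomposes in the frame $\{\xi,\mu\}$ as $X=\escpr{X,\xi}\,\xi+\escpr{X,\mu}\,\mu$, and hence using the orthogonal decomposition $\nu_K=\escpr{\nu_K,\xi}\,\xi+\escpr{\nu_K,\mu}\,\mu$ together with \eqref{eq:anangles} one gets $\escpr{X,\nu_K}=\escpr{X,\xi}\,\escpr{N_K,\mu}-\escpr{X,\mu}\,\escpr{N_K,\xi}$. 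The point is that only the normal-to-$\ptl\cc$ component $\escpr{N_K,\xi}$ of the anisotropic normal contributes to something that cannot be absorbed by choosing the flow, since $\escpr{X,\xi}=0$ on $\ptl\cc$. Thus, for an admissible flow, $\escpr{X,\nu_K}=-\escpr{X,\mu}\,\escpr{N_K,\xi}$ along $\ptl\Sg$, and Proposition~\ref{prop:1starea} gives
\[
A_K'(0)=-\int_\Sg nH_K\,u\,d\Sg+\int_{\ptl\Sg}\escpr{X,\mu}\,\escpr{N_K,\xi}\,d(\ptl\Sg).
\]

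For the ``if'' direction, suppose $H_K$ is constant, say $H_K\equiv c$, and $\escpr{N_K,\xi}=0$ on $\ptl\Sg$. Then for any volume-preserving admissible flow we have $\int_\Sg u\,d\Sg=V'(0)=0$ by \eqref{eq:1stvol}, so the first term becomes $-nc\int_\Sg u\,d\Sg=0$, while the boundary term vanishes since $\escpr{N_K,\xi}=0$; hence $A_K'(0)=0$ and $\Sg$ is anisotropic stationary.

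For the ``only if'' direction, assume $\Sg$ is anisotropic stationary. Testing with flows supported in the interior $\Sg\setminus\ptl\Sg$ (so the boundary term drops) and with $\int_\Sg u\,d\Sg=0$, standard arguments force $nH_K$ to equal a constant: indeed $\int_\Sg nH_K\,u\,d\Sg=0$ for every $u$ with zero mean implies $nH_K$ is $L^2$-orthogonal to the orthogonal complement of the constants, i.e.\ $H_K$ is constant. Writing $H_K\equiv c$, for a general volume-preserving admissible flow the interior term is again $-nc\int_\Sg u\,d\Sg=0$, so stationarity reduces to $\int_{\ptl\Sg}\escpr{X,\mu}\,\escpr{N_K,\xi}\,d(\ptl\Sg)=0$ for all admissible volume-preserving flows; since one can prescribe $\escpr{X,\mu}$ along $\ptl\Sg$ freely (adjusting the interior to keep the volume constant, via Barbosa--do~Carmo), the fundamental lemma of the calculus of variations yields $\escpr{N_K,\xi}=0$ on $\ptl\Sg$.

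The main obstacle is the flexibility of admissible, volume-preserving flows: one must justify that the normal speed $u$ on $\Sg$ and the tangential-to-$\ptl\cc$ data $\escpr{X,\mu}$ along $\ptl\Sg$ can be prescribed essentially independently, subject only to $\int_\Sg u\,d\Sg=0$, by a flow that keeps $\ptl\cc$ and $0$ fixed. This is precisely the content of the construction in Barbosa and do~Carmo~\cite[Lem.~(2.4)]{bdc} adapted to the cone (using that $\ptl\cc$ is a smooth hypersurface away from $0$ and $X$ need only be tangent there and vanish at $0$), so I would simply invoke it rather than reprove it; with that in hand both implications follow by the separation-of-variables argument sketched above.
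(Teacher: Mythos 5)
Your proposal is correct and takes essentially the same route as the paper, which only sketches the argument by citing Koiso--Palmer: combine the first variation formula of Proposition~\ref{prop:1starea} with \eqref{eq:1stvol}, the identity $\escpr{X,\nu_K}=-\escpr{X,\mu}\,\escpr{N_K,\xi}$ coming from \eqref{eq:anangles} and the tangency of $X$ to $\ptl\cc\setminus\{0\}$, and the Barbosa--do Carmo construction of admissible volume-preserving flows to separate interior and boundary test data. (Your statement that $X$ decomposes in the frame $\{\xi,\mu\}$ should be read as referring to its projection onto $T(\ptl\Sg)^\bot$, which is harmless here since $\nu_K$ lies in that plane.)
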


\begin{example}
\label{ex:typical}
Take the hypersurface $\Sg=\ptl K\cap\overline{\cc}$ with unit normal $N=\eta_K$ (the one pointing outside $K$). Note that $\ptl K$ meets $\ptl\cc$ transversally because $K$ is convex and contains $0$ in its interior. Hence, $\Sg$ is a compact hypersurface with boundary $\ptl\Sg$ such that $\Sg\cap\ptl\cc=\ptl\Sg$. In Examples~\ref{ex:umbilical} (iii) we saw that $H_K(p)=-1$ and $N_K(p)=p$, for any $p\in\Sg$. As $\ptl\cc$ is invariant under dilations centered at $0$ then $N_K(p)\in T_p(\ptl\cc)$ for any $p\in\ptl\Sg$ and so, $\escpr{N_K,\xi}=0$. From the previous proposition we conclude that $\Sg$ is anisotropic stationary.
\end{example}

In the isotropic case the orthogonality condition for a stationary hypersurface $\Sg$ entails that $\nu=\xi$ along $\ptl\Sg$. In the next lemma we establish a similar equality for the anisotropic conormal $\nu_K$ that will be useful in future results.

\begin{lemma}
\label{lem:nuK}
Let $\Sg$ be a two-sided hypersurface immersed in $\overline{\cc}$ with boundary $\ptl\Sg$ in $\ptl\cc\setminus\{0\}$. If $\escpr{N_K,\xi}=0$ in $\ptl\Sg$, then $\cos\theta$ never vanishes and
\[
\nu_K=\frac{\var_K}{\cos\theta}\,\xi\quad\text{along }\ptl\Sg,
\]
where $\var_K:=h_K(N)$ and $\theta$ is the oriented angle function between $\nu$ and $\xi$ in $T(\ptl\Sg)^\bot$.
\end{lemma}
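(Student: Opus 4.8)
The plan is to exploit the two orthogonality relations in \eqref{eq:anangles} together with the decomposition \eqref{eq:conormal} of $\nu_K$. Since $\nu_K$ is a normal vector to $\ptl\Sg$ lying in the plane $T(\ptl\Sg)^\bot$ spanned by the orthonormal basis $\{\xi,\mu\}$, I would write $\nu_K = \escpr{\nu_K,\xi}\,\xi + \escpr{\nu_K,\mu}\,\mu$. The hypothesis $\escpr{N_K,\xi}=0$ plugged into the second identity of \eqref{eq:anangles} gives $\escpr{\nu_K,\mu}=0$, so $\nu_K = \escpr{\nu_K,\xi}\,\xi$ is a multiple of $\xi$ along $\ptl\Sg$.

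Next I need to identify the coefficient $\escpr{\nu_K,\xi}$ and show it never vanishes. Using \eqref{eq:conormal}, $\nu_K = \var_K\,\nu - \escpr{N_K,\nu}\,N$, and pairing with $\xi$ while substituting the expression $\xi = (\cos\theta)\,\nu + (\sin\theta)\,N$ from \eqref{eq:angles} (recalling $\escpr{\nu,N}=0$, $|\nu|=|N|=1$), I get $\escpr{\nu_K,\xi} = \var_K\cos\theta - \escpr{N_K,\nu}\sin\theta$. To simplify the second term, I would use $\nu_K = \var_K\,\nu - \escpr{N_K,\nu}\,N$ once more, or more directly compute $\escpr{N_K,\nu}$ and $\escpr{N_K,N}=\var_K$ against the frame; alternatively, observe from \eqref{eq:anangles} that $\escpr{N_K,\mu} = \escpr{\nu_K,\xi}$, and expand $\escpr{N_K,\mu}$ using $\mu = -(\sin\theta)\,\nu + (\cos\theta)\,N$ from \eqref{eq:angles}, giving $\escpr{N_K,\mu} = -(\sin\theta)\,\escpr{N_K,\nu} + (\cos\theta)\,\var_K$ by \eqref{eq:nkn}. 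Either way, the cleaner route is to use the other decomposition: from $\escpr{N_K,\xi}=0$ and $\xi=(\cos\theta)\,\nu+(\sin\theta)\,N$, expand $0 = (\cos\theta)\,\escpr{N_K,\nu} + (\sin\theta)\,\var_K$, which yields $\escpr{N_K,\nu} = -\var_K\tan\theta$ wherever $\cos\theta\neq 0$.

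To close the argument I must rule out $\cos\theta = 0$ and then pin down the coefficient. If $\cos\theta(p)=0$ at some $p\in\ptl\Sg$, then $\xi = \pm N$ there by \eqref{eq:angles}, forcing $\escpr{N_K,\xi} = \pm\escpr{N_K,N} = \pm\var_K = \pm h_K(N)$; since $K$ is a convex body containing $0$ in its interior, $h_K(N) > 0$, contradicting $\escpr{N_K,\xi}=0$. Hence $\cos\theta$ never vanishes on $\ptl\Sg$. Now with $\escpr{N_K,\nu} = -\var_K\tan\theta$ in hand, substitute back: $\escpr{\nu_K,\xi} = \var_K\cos\theta - (-\var_K\tan\theta)\sin\theta = \var_K\cos\theta + \var_K\,\frac{\sin^2\theta}{\cos\theta} = \frac{\var_K}{\cos\theta}$. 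Therefore $\nu_K = \frac{\var_K}{\cos\theta}\,\xi$ along $\ptl\Sg$, as claimed.

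The only subtle point, and the one I would be careful about, is the sign positivity $h_K(N)>0$ that both drives the non-vanishing of $\cos\theta$ and guarantees $\var_K\neq 0$; this is immediate from $0$ being an interior point of $K$, so no real obstacle arises. Everything else is linear algebra in the two-dimensional normal plane $T(\ptl\Sg)^\bot$ using the trigonometric frame relations \eqref{eq:angles} and the anisotropic identities \eqref{eq:anangles}.
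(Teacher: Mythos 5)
Your proof is correct. The first half coincides with the paper's: both use the identity $\escpr{\nu_K,\mu}=-\escpr{N_K,\xi}=0$ from \eqref{eq:anangles} to conclude $\nu_K=\escpr{\nu_K,\xi}\,\xi$, and both expand $\escpr{N_K,\xi}=0$ in the frame $\{\nu,N\}$ to compute the coefficient. Where you genuinely diverge is in how the non-vanishing of $\cos\theta$ is obtained. The paper proves it indirectly: it first shows $\escpr{\nu_K,\xi}\neq 0$ via the chain $\escpr{\nu_K,\xi}^2=\escpr{N_K,\mu}^2=|N_K^*|^2=\escpr{N_K,\nu}^2+\var_K^2>0$, and then derives the identity $\escpr{\nu_K,\xi}\,\cos\theta=\var_K$, which forces $\cos\theta\neq 0$ and yields the formula in one stroke. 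You instead rule out $\cos\theta=0$ pointwise at the outset: if $\cos\theta(p)=0$ then $\xi=\pm N$ at $p$ by \eqref{eq:angles}, so $\escpr{N_K,\xi}=\pm\var_K=\pm h_K(N)\neq 0$ (since $0$ is interior to $K$), contradicting the hypothesis; after that you solve $\escpr{N_K,\nu}=-\var_K\tan\theta$ and substitute to get $\escpr{\nu_K,\xi}=\var_K/\cos\theta$. Your route is slightly more elementary and makes explicit the single geometric input ($h_K(N)>0$) that drives the non-degeneracy, at the cost of a case argument; the paper's route avoids the contradiction step and packages the non-vanishing of both $\escpr{\nu_K,\xi}$ and $\cos\theta$ into one algebraic identity. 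Both are complete and correct.
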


\begin{proof}
Recall that $\nu_K:=\var_K\,\nu-\escpr{N_K,\nu}\,N$, which is normal to $\ptl\Sg$. By equation~\eqref{eq:anangles} we have $\escpr{\nu_K,\mu}=-\escpr{N_K,\xi}=0$. This shows that $\nu_K$ is normal to $\ptl\cc$ and so, $\nu_K=\escpr{\nu_K,\xi}\,\xi$ along $\ptl\Sg$. 

Let us compute $\escpr{\nu_K,\xi}$. Consider the projection $N_K^*$ of $N_K$ onto $T(\ptl\Sg)^\bot$. By using \eqref{eq:anangles}, equality $\escpr{N_K,\xi}=0$, and that $\{\nu,N\}$ and $\{\xi,\mu\}$ are orthonormal basis of $T(\ptl\Sg)^\bot$, we obtain
\[
\escpr{\nu_K,\xi}^2=\escpr{N_K,\mu}^2=|N_K^*|^2=\escpr{N_K,\nu}^2+\var_K^2,
\]
which is a positive number. Hence, $\escpr{\nu_K,\xi}$ never vanishes along $\ptl\Sg$. On the other hand, by substituting the expression for $\xi$ in \eqref{eq:angles} into equality $\escpr{N_K,\xi}=0$, we get
\[
\sin\theta=-\frac{\escpr{N_K,\nu}\,\cos\theta}{\var_K}\quad\text{along }\ptl\Sg.
\]
The definition of $\nu_K$ and the two previous relations lead to
\[
\escpr{\nu_K,\xi}=\var_K\,\cos\theta-\escpr{N_K,\nu}\,\sin\theta=\frac{\var^2_K+\escpr{N_K,\nu}^2}{\var_K}\,\cos\theta=\frac{\escpr{\nu_K,\xi}^2}{\var_K}\,\cos\theta,
\]
and so
\[
\frac{\escpr{\nu_K,\xi}}{\var_K}\,\cos\theta=1\quad\text{along }\ptl\Sg.
\] 
This implies that $\cos\theta$ never vanishes and allows to deduce the announced expression for $\nu_K$. 
\end{proof}

Now, we can provide new expressions for the derivatives of the anisotropic area when we consider an anisotropic stationary hypersurface $\Sg$ in $\cc$. On the one hand, the boundary term in the formula for $A_K'(0)$ obtained in Proposition~\ref{prop:1starea} vanishes for any flow $\{\phi_t\}_{t\in\rr}$ admissible for $\cc$. This comes from Proposition~\ref{prop:varprop} and Lemma~\ref{lem:nuK} since the velocity vector field $X$ is tangent to $\ptl\cc\setminus\{0\}$. Hence
\begin{equation}
\label{eq:1stareast}
A_K'(0)=-nH_K\int_\Sg u\,d\Sg,
\end{equation}
where $u:=\escpr{X,N}$ on $\Sg$. Note that, for the flow $\phi_t(p):=e^t\,p$, the first equation in \eqref{eq:dilarea} implies that $A_K'(0)=nA_K(\Sg)$. Thus, by having in mind \eqref{eq:1stareast} and equation~\eqref{eq:volume}, it follows that
\begin{equation}
\label{eq:minkowski}
A_K(\Sg)=-(n+1)\,H_K\,V(\Sg).
\end{equation}
This identity is a \emph{Minkowski-type formula} for compact anisotropic stationary hypersurfaces in $\cc$. 

On the other hand, the boundary integrand appearing in $A_K''(0)$, see Proposition~\ref{prop:2ndarea}, can be written in terms of the Euclidean extrinsic geometry of $\ptl\cc$. We prove this in the next proposition, where we also compute the second derivative of the volume for certain flows.

\begin{proposition}
\label{prop:2ndareavol}
Let $\Sg$ be a two-sided anisotropic stationary hypersurface immersed in $\overline{\cc}$ with boundary $\ptl\Sg$ in $\ptl\cc\setminus\{0\}$. Consider a smooth complete vector field $X$ on $\rrn$ such that $X_{|\Sg}=N_K$, $X(0)=0$ and $X$ is tangent to $\ptl\cc\setminus\{0\}$. Then, for the admissible flow $\{\phi_t\}_{t\in\rr}$ defined by the one-parameter group of diffeomorphisms associated to $X$, we have
\begin{align}
\label{eq:2ndareast}
A_K''(0)&=\int_\Sg\left(n^2H_K^2-\emph{tr}(B_K^2)\right)\var_K\,d\Sg-\int_\Sg n H_K\,v\,d\Sg-\int_{\ptl\Sg}\frac{\emph{II}(N_K,N_K)}{\cos\theta}\,\var_K\,d(\ptl\Sg),
\\
\label{eq:2ndvol}
V''(0)&=\int_\Sg(-nH_K\,\var_K+v)\,d\Sg,
\end{align}
where $B_K$ is the anisotropic shape operator of $\Sg$, $\emph{II}$ is the second fundamental form of $\ptl\cc\setminus\{0\}$ with respect to the inner normal, $\theta$ is the oriented angle function between $\nu$ and $\xi$ in $T(\ptl\Sg)^\bot$, and $v:=\escpr{Z,N}$ is the normal component of $Z:=D_XX$.
\end{proposition}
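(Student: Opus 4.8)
The plan is to specialize the general formulas of Proposition~\ref{prop:2ndarea} and equation~\eqref{eq:1stvol} to the present setting, using the anisotropic stationarity of $\Sg$ together with Lemma~\ref{lem:nuK} to rewrite the boundary term, and to track the normal component of $Z=D_XX$ along $\ptl\Sg$. Since $X_{|\Sg}=N_K$, $X(0)=0$ and $X$ is tangent to $\ptl\cc\setminus\{0\}$, the associated one-parameter group is an admissible flow for $\cc$, so Proposition~\ref{prop:2ndarea} applies verbatim and gives
\[
A_K''(0)=\int_\Sg\left(n^2H_K^2-\text{tr}(B_K^2)\right)\var_K\,d\Sg-\int_\Sg nH_K\,v\,d\Sg-\int_{\ptl\Sg}\escpr{Z,\nu_K}\,d(\ptl\Sg).
\]
The whole content of \eqref{eq:2ndareast} is therefore the identity $\escpr{Z,\nu_K}=\text{II}(N_K,N_K)\,\var_K/\cos\theta$ along $\ptl\Sg$, and it is here that I expect the main work. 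By Lemma~\ref{lem:nuK} we have $\nu_K=(\var_K/\cos\theta)\,\xi$ along $\ptl\Sg$, so it suffices to show $\escpr{Z,\xi}=\text{II}(N_K,N_K)$ on $\ptl\Sg$. The hard part will be extracting this from the fact that $X$ is tangent to $\ptl\cc\setminus\{0\}$: differentiating $\escpr{X,\xi}=0$ along $\ptl\cc$ in the direction $X$ itself (which is legitimate since $X$ is tangent to the hypersurface $\ptl\cc$) yields $\escpr{D_XX,\xi}+\escpr{X,D_X\xi}=0$, hence $\escpr{Z,\xi}=-\escpr{X,D_X\xi}$; and since $X_{|\ptl\Sg}=N_K$ is tangent to $\ptl\cc$, the quantity $-\escpr{N_K,D_{N_K}\xi}$ is precisely $\text{II}(N_K,N_K)$ with the sign conventions fixed in the text ($\xi$ being the inner normal and $\text{II}$ the second fundamental form with respect to the inner normal, so that $\text{II}(v,v)=\escpr{D_v\xi,v}$ up to the sign we must pin down carefully). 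One must be slightly careful that $N_K$ is tangent to $\ptl\cc$ along $\ptl\Sg$: this is exactly the stationarity condition $\escpr{N_K,\xi}=0$ from Proposition~\ref{prop:varprop}, which also guarantees $\cos\theta\neq 0$ by Lemma~\ref{lem:nuK} so the expression is well defined.

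For the volume formula \eqref{eq:2ndvol} I would differentiate \eqref{eq:1stvol} once more. Write, as in the proof of Proposition~\ref{prop:1starea}, $V(t)=\tfrac1{n+1}\int_\Sg\escpr{\phi_t(p),(N_t\circ\phi_t)(p)}\,\text{Jac}\,\phi_t\,d\Sg$; differentiating twice at $t=0$ and using $\phi_0=\text{Id}$ produces a sum of six terms coming from the three factors $\phi_t(p)$, $N_t\circ\phi_t$, and $\text{Jac}\,\phi_t$. The ingredients are already available: $\frac{d}{dt}|_{t=0}\phi_t(p)=X(p)=N_K(p)$ and $\frac{d^2}{dt^2}|_{t=0}\phi_t(p)=Z(p)$ on $\Sg$; $D_{X(p)}\ovn=0$ by \eqref{eq:tente}, so the first derivative of the normal factor drops out and the second derivative of that factor is $D_{X(p)}D_X\ovn$, whose normal component is $-v$ by Lemma~\ref{lem:potato} (the tangential part pairs to zero against $N$ in $\escpr{p,\cdot}$ up to a divergence-type term that integrates away, or more simply one just keeps $\escpr{p,N}$ unchanged at first order in $N$); and $j_p'(0)=-nH_K$, $j_p''(0)$ as in \eqref{eq:j2prima}. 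Collecting the surviving terms, the mixed term $2\,h_p'(0)\,j_p'(0)$-analogue vanishes because $D_X\ovn=0$, and one is left with $\frac1{n+1}\int_\Sg\big(\escpr{Z,N}+\escpr{p,D_XD_X\ovn}+2\escpr{X,N}\,\divv_\Sg X+\escpr{p,N}\,j_p''(0)/\!\ldots\big)$; the radial terms $\escpr{p,N}$ reassemble, via the same Minkowski/divergence manipulation used to pass from \eqref{eq:1stareast} to \eqref{eq:minkowski}, into the clean expression $\int_\Sg(-nH_K\var_K+v)\,d\Sg$. Concretely it is cleanest to note that $V'(t)=\int_{\Sg_t}\escpr{X,N_t}\,d\Sg_t=\int_{\Sg_t}\var_{K,t}\,d\Sg_t=A_K(t)$ is false in general, but that $\frac{d}{dt}V(t)=\int_{\Sg_t}\escpr{\ovn\circ\phi_t,\cdot}$—rather, I would simply use $V'(t)=\int_{\Sg_t}\escpr{\overline N,\cdot}$ evaluated via the normal speed $\escpr{X,N_t}$, so that $V'(t)=\int_{\Sg_t}\escpr{X,N_t}\,d\Sg_t$ and then differentiate this once more, giving $V''(0)=\int_\Sg\big(\escpr{Z,N}+\escpr{X,D_X\ovn}\big)\,d\Sg+\int_\Sg\escpr{X,N}\,\divv_\Sg X\,d\Sg=\int_\Sg\big(v+0+\var_K(-nH_K)\big)\,d\Sg$, where the middle term vanishes by \eqref{eq:tente} and the last uses $\escpr{X,N}=\var_K$ and \eqref{eq:jprima}. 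This is exactly \eqref{eq:2ndvol}.

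I expect the single genuine obstacle to be the sign bookkeeping in the boundary identity $\escpr{Z,\xi}=\text{II}(N_K,N_K)$: one must reconcile the orientation conventions for $\{\xi,\mu\}$ and $\{\nu,N\}$ fixed around \eqref{eq:angles}, the definition of $\text{II}$ "with respect to the inner normal'' $\xi$, and the direction in which $\var_K/\cos\theta$ is positive (it is, since $\escpr{\nu_K,\xi}^2=\var_K^2+\escpr{N_K,\nu}^2>0$ and $\var_K=h_K(N)>0$ forces $\cos\theta>0$ along $\ptl\Sg$ by the last display in the proof of Lemma~\ref{lem:nuK}). Once the relation $\escpr{D_XX,\xi}=-\escpr{X,D_X\xi}$ is in hand and $X_{|\ptl\Sg}=N_K$ is substituted, everything reduces to the definition $\text{II}(v,w)=\escpr{D_v\xi,w}$ (or its negative) for $v,w$ tangent to $\ptl\cc$; I would verify the sign on the model case $\cc=$ half-space, where $\ptl\cc$ is totally geodesic, $\text{II}\equiv 0$, and indeed $Z$ has vanishing $\xi$-component because $X$ can be taken with straight flow lines there, confirming consistency. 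Apart from that check, the computation is a routine assembly of Proposition~\ref{prop:2ndarea}, Lemma~\ref{lem:nuK}, \eqref{eq:tente}, Lemma~\ref{lem:potato}, and \eqref{eq:1stvol}.
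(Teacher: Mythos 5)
Your proposal follows essentially the same route as the paper: the area formula is reduced via Proposition~\ref{prop:2ndarea} and Lemma~\ref{lem:nuK} to the identity $\escpr{Z,\xi}=\text{II}(N_K,N_K)$ along $\ptl\Sg$, obtained exactly as you describe by differentiating $\escpr{X,\xi}=0$ along the flow of $X$, and the volume formula comes from differentiating $V'(t)=\int_{\Sg_t}\escpr{X,N_t}\,d\Sg_t$ once more, using \eqref{eq:tente} and \eqref{eq:jprima} so that only $\int_\Sg(v-nH_K\,\var_K)\,d\Sg$ survives. The one sign you left open is fixed by the convention $\text{II}(v,w)=-\escpr{D_v\xi,w}$ (consistent with $B_p(w)=-D_wN$), which gives $\escpr{Z,\xi}=-\escpr{X,D_X\xi}=\text{II}(X,X)=\text{II}(N_K,N_K)$ as required.
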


\begin{remark}
Observe that $N_K$ is tangent to $\ptl\cc$ along $\ptl\Sg$ because $\Sg$ is anisotropic stationary. This guarantees the existence of a vector field $X$ in the conditions of the statement and that the term $\text{II}(N_K,N_K)$ is well defined. Note also that $\cos\theta$ never vanishes by Lemma~\ref{lem:nuK}.
\end{remark}

\begin{proof}[Proof of Proposition~\ref{prop:2ndareavol}]
We first check that \eqref{eq:2ndareast} holds. From Proposition~\ref{prop:2ndarea} it suffices to see that
\[
\escpr{Z,\nu_K}=\frac{\text{II}(N_K,N_K)}{\cos\theta}\,\var_K\quad\text{along }\ptl\Sg.
\]
By Lemma~\ref{lem:nuK} we know that
\[
\escpr{Z,\nu_K}=\frac{\var_K}{\cos\theta}\,\escpr{Z,\xi}\quad\text{along }\ptl\Sg.
\]
It is clear that $\escpr{X,\xi}=0$ along $\ptl\cc$ because $X$ is tangent to $\ptl\cc\setminus\{0\}$ and $X(0)=0$. By differentiating with respect to $X$, we obtain
\[
0=\escpr{D_XX,\xi}+\escpr{X,D_X\xi}=\escpr{Z,\xi}-\text{II}(X,X),
\]
so that $\escpr{Z,\xi}=\text{II}(X,X)=\text{II}(N_K,N_K)$ along $\ptl\Sg$.

Now we compute $V''(0)$. Equation~\eqref{eq:1stvol} gives us
\[
V'(t)=\int_{\Sg_t}\escpr{X,N_t}\,d\Sg_t=\int_\Sg\big(\escpr{X,N_t}\circ\phi_t\big)\,\text{Jac}\,\phi_t\,d\Sg.
\]
By differentiating and having in mind \eqref{eq:dxn}, \eqref{eq:1stj}, \eqref{eq:tente}, \eqref{eq:mc} and \eqref{eq:nkn}, it follows that
\[
V''(0)=\int_\Sg\big(\escpr{D_XX,N}+\escpr{X,D_X\overline{N}}+\escpr{X,N}\,\divv_\Sg X\big)\,d\Sg=\int_\Sg(v-nH_K\,\var_K)\,d\Sg.
\]
This completes the proof.
\end{proof}

We now turn to the main result of the paper. This is a classification of anisotropic stable hypersurfaces in a Euclidean solid cone. A two-sided hypersurface $\Sg$ immersed in $\overline{\cc}$ with boundary $\ptl\Sg$ in $\ptl\cc\setminus\{0\}$ is \emph{anisotropic stable} if $A_K'(0)=0$ and $A_K''(0)\geq 0$ for any flow admissible for $\cc$ and preserving the volume of $\Sg$. When the cone is convex, the hypersurface $\ptl K\cap\overline{\cc}$ is anisotropic stable since it 
minimizes the anisotropic area (computed with respect to the outer unit normal) among compact hypersurfaces in $\cc$ separating the same volume, see \cite[Cor.~1.2]{milman-rotem} and \cite[Thm.~1.3]{cabre-rosoton-serra}. So, it is natural to ask if this property characterizes $\ptl K\cap\overline{\cc}$ up to translations and dilations centered at $0$. The next example shows that, in general, the answer is negative.

\begin{example}
\label{ex:nonuniqueness}
Let $\cc$ be a convex cone different from a Euclidean half-space and such that $\ptl\cc$ contains a half-hyperplane $P$ (for instance, any solid cone in $\rr^2$ satisfies this property). As we observed above, if $P^+$ is the open half-space with $\cc\subset P^+$, then $\ptl K\cap\overline{P^+}$ minimizes the anisotropic area in $P^+$ for fixed volume. In particular, by applying a suitable translation along $P$ we produce an anisotropic stable hypersurface in $\cc$ which has not the form $p_0+\la\,(\ptl K\cap\overline{\cc})$ for some $p_0\in\rrn$ and $\la>0$.
\end{example}

This example illustrates that the optimal conclusion to be deduced for an anisotropic stable hypersurface $\Sg$ in $\cc$ is that $\Sg\subset\ptl K$, up to translation and homothety. We prove this fact in the next uniqueness statement under our regularity conditions for $\Sg$, $\cc$ and $K$. 

\begin{theorem}
\label{th:main}
Let $\cc\subset\rrn$ be a solid convex cone over a smooth domain of $\sph^n$. Consider a compact, connected, two-sided hypersurface $\Sg$ immersed in $\overline{\cc}$ with smooth boundary $\ptl\Sg$ in $\ptl\cc\setminus\{0\}$ and such that $\Sg\cap\ptl\cc=\ptl\Sg$. If $\Sg$ is anisotropic stable for the area $A_K$ defined by a smooth strictly convex body $K\subset\rrn$ then, there is $p_0\in\rrn$ and $\la>0$ such that $\Sg\subset p_0+\la\,(\ptl K)$.
\end{theorem}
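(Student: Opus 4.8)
The strategy is Wente's dilation trick, adapted to the cone via the vector field $X$ with $X_{|\Sg}=N_K$ constructed in Proposition~\ref{prop:2ndareavol}. First I would record that $\Sg$ is automatically anisotropic stationary (the stability hypothesis forces $A_K'(0)=0$ for all volume-preserving admissible flows, hence $H_K$ is a constant and $\escpr{N_K,\xi}=0$ along $\ptl\Sg$ by Proposition~\ref{prop:varprop}). If $H_K=0$, then $V(\Sg)=0$ by the Minkowski formula \eqref{eq:minkowski}; one then plugs into the stability inequality the genuinely volume-preserving deformation built from $X$ together with a dilation, and the sign of $A_K''(0)$ forces $\mathrm{tr}(B_K^2)=nH_K^2=0$ everywhere, i.e.\ $\Sg$ is anisotropic umbilical, so Proposition~\ref{prop:umbilical} gives $\Sg$ in a hyperplane; but a compact hypersurface with boundary in $\ptl\cc\setminus\{0\}$ inside a convex cone cannot be planar unless $\ptl\cc$ contains that hyperplane, which is excluded once we check it contradicts compactness (or handle it as a degenerate case absorbed into the Wulff shape statement by letting $\la\to\infty$). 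So the main case is $H_K\neq0$; after a dilation normalize $H_K=-1$.

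Next I would build the actual test variation. The flow $\{\phi_t\}$ of $X$ is admissible for $\cc$ but need not preserve volume; since $V'(0)=\int_\Sg\var_K\,d\Sg=A_K(\Sg)\neq0$ (it equals $-(n+1)H_K V(\Sg)$, and $V(\Sg)\neq0$ when $H_K\neq0$ because otherwise \eqref{eq:minkowski} would give $A_K(\Sg)=0$), I compose with a time-dependent dilation: set $\widetilde\phi_t:=\delta_{\la(t)}\circ\phi_t$ where $\la(t)>0$ is chosen so that $V(\widetilde\phi_t(\Sg))\equiv V(\Sg)$; using \eqref{eq:dilarea}, $\la(t)^{n+1}V(t)=V(\Sg)$ determines $\la$ smoothly with $\la(0)=1$. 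This $\widetilde\phi_t$ is still admissible (dilations preserve $\cc$ and fix $0$) and volume-preserving, so stability applies: $\widetilde A_K''(0)\geq0$. Differentiating $\widetilde A_K(t)=\la(t)^n A_K(t)$ twice at $0$ and using $A_K'(0)=0$, $V'(0)=A_K(\Sg)$, $V''(0)$ from \eqref{eq:2ndvol}, I get
\[
\widetilde A_K''(0)=A_K''(0)+n\,\la''(0)\,A_K(\Sg),
\qquad
\la''(0)=-\frac{1}{n+1}\Big(\frac{V''(0)}{V(\Sg)}-\frac{(n+2)}{(n+1)}\frac{A_K(\Sg)^2}{V(\Sg)^2}\Big).
\]
Substituting the formulas \eqref{eq:2ndareast} and \eqref{eq:2ndvol} (with $H_K=-1$), the terms involving $v=\escpr{Z,N}$ cancel between $A_K''(0)$ and $\la''(0)A_K(\Sg)$ — this is the crucial cancellation that removes the unknown acceleration field from the interior — and, using $A_K(\Sg)=(n+1)V(\Sg)$ and $\int_\Sg\var_K=A_K(\Sg)$, the volume-normalization contribution collapses to a clean constant. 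What survives is
\[
0\leq\widetilde A_K''(0)=\int_\Sg\big(n^2-\mathrm{tr}(B_K^2)\big)\var_K\,d\Sg
-\int_{\ptl\Sg}\frac{\mathrm{II}(N_K,N_K)}{\cos\theta}\,\var_K\,d(\ptl\Sg)+(\text{constant boundary-free term})\leq 0,
\]
where the interior integrand is $\leq0$ by the umbilicity inequality \eqref{eq:ineq} (recall $\var_K=h_K(N)>0$ and $nH_K^2=n$, while $\mathrm{tr}(B_K^2)\geq nH_K^2=n$), and the boundary integrand is $\geq0$ because $\cc$ is \emph{convex} so $\mathrm{II}\geq0$ and $\cos\theta>0$ by Lemma~\ref{lem:nuK}, while $\var_K>0$.

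The conclusion is then forced: every term must vanish. From the interior term, $\mathrm{tr}(B_K^2)=nH_K^2$ pointwise, so $\Sg$ is anisotropic umbilical with constant $H_K\neq0$; Proposition~\ref{prop:umbilical} yields $\Sg\subset p_0+\la(\ptl K)$ for suitable $p_0,\la$, which is exactly the claim (and undoing the initial normalizing dilation only changes $\la$). I would present the $H_K=0$ branch first and dispose of it, then run the above argument for $H_K\neq0$. \textbf{The main obstacle} is organizing the two second-order differentiations so that the acceleration field $Z$ genuinely drops out: one must verify that the only surviving $v$-dependence, namely $-\int_\Sg nH_K v$ in $A_K''(0)$ and the $\int_\Sg v$ inside $V''(0)$, appear with exactly matching coefficients after multiplication by $n\la''(0)A_K(\Sg)$ — a short but delicate bookkeeping with the constants $n$, $n+1$, $n+2$. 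The rest is sign-chasing that leverages convexity of $\cc$ exactly once, in the boundary term, which is precisely where this proof differs from the half-space case.
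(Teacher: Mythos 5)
Your plan is the same as the paper's (flow of a vector field $X$ with $X_{|\Sg}=N_K$ tangent to $\ptl\cc\setminus\{0\}$, corrected by dilations to preserve volume, cancellation of the acceleration term, convexity used only in the boundary term), and the two key structural points—the cancellation of $v=\escpr{Z,N}$ and the sign analysis via \eqref{eq:ineq} and $\mathrm{II}\geq 0$, $\cos\theta>0$—are correctly identified. However, the central bookkeeping as written is wrong in a way that breaks your sign argument. For the flow of $X$ alone one has $A_K'(0)=-nH_K\,A_K(\Sg)\neq 0$ by \eqref{eq:1stareast} with $u=\var_K$ (it is only the composed, volume-preserving variation whose first derivative vanishes), so your identity $\widetilde A_K''(0)=A_K''(0)+n\,\la''(0)\,A_K(\Sg)$ drops the cross terms $n(n-1)\,\la'(0)^2A_K(\Sg)+2n\,\la'(0)\,A_K'(0)$, which do not cancel: since $\la'(0)=H_K$ (cf. \eqref{eq:latprima1}), they contribute $-n(n+1)H_K^2A_K(\Sg)$. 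Your $v$-cancellation between $A_K''(0)$ and $n\la''(0)A_K(\Sg)$ is correct (cf. \eqref{eq:lat2prima2}), but with the omitted terms the interior contribution comes out as $\int_\Sg\big(n(n+2)H_K^2-\mathrm{tr}(B_K^2)\big)\var_K\,d\Sg$—consistent with the $n^2-\mathrm{tr}(B_K^2)$ plus an unspecified ``constant'' in your display—and this is \emph{not} nonpositive by \eqref{eq:ineq}: at an anisotropic umbilical point $\mathrm{tr}(B_K^2)=nH_K^2$, so the integrand is strictly positive for $n\geq 2$ (the truncated Wulff shape itself would violate your claimed chain $0\leq\widetilde A_K''(0)\leq 0$). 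Once the full product-rule expansion is carried out, everything collapses to the paper's \eqref{eq:desi}, namely $a_K''(0)=-\int_\Sg\big(\mathrm{tr}(B_K^2)-nH_K^2\big)\var_K\,d\Sg-\int_{\ptl\Sg}\mathrm{II}(N_K,N_K)\,\var_K\,(\cos\theta)^{-1}\,d(\ptl\Sg)$, and then your sign discussion and the appeal to Proposition~\ref{prop:umbilical} finish the proof exactly as in the paper.

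Two smaller corrections. The branch $H_K=0$ is vacuous rather than a case to be handled: \eqref{eq:minkowski} with $H_K=0$ gives $A_K(\Sg)=0$, impossible since $\var_K=h_K(N)>0$; it does not give $V(\Sg)=0$, and your proposed dilation correction would anyway be ill-defined if $V(\Sg)=0$, so the hyperplane discussion (and the ``$\la\to\infty$'' escape) should be deleted. Also, a dilation $\delta_\mu$ with $\mu>0$ rescales $|H_K|$ but cannot change its sign, so ``normalize $H_K=-1$'' is not justified; either normalize $|H_K|=1$ or, better, keep $H_K$ general—nothing in the final formula uses its sign.
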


\begin{proof}
We will follow the idea explained in the Introduction. As $\Sg$ is anisotropic stationary, Proposition~\ref{prop:varprop} implies that $H_K$ is constant on $\Sg$ and $\escpr{N_K,\xi}=0$ along $\ptl\Sg$. So, we can find a smooth complete vector field $X$ on $\rr^n$ such that $X_{|\Sg}=N_K$, $X(0)=0$ and $X$ is tangent to $\ptl\cc\setminus\{0\}$. It is clear that $u=\var_K$ on $\Sg$, where $u:=\escpr{X,N}$ and $\var_K:=h_K(N)$. Let $\{\psi_t\}_{t\in\rr}$ be the one-parameter group of diffeomorphisms associated to $X$. We take the functionals $A_K(t):=A_K(\Sg_t)$ and $V(t):=V(\Sg_t)$ associated to the variation $\Sg_t:=\psi_t(\Sg)$. From the Minkowski formula in equation~\eqref{eq:minkowski} we have $H_K\neq 0$ and $V(\Sg)\neq 0$. Hence, there is $\eps>0$ such that $V(t)$ has the same sign as $V(\Sg)$ for any $t\in(-\eps,\eps)$. Next, we apply to $\Sg_t$ a dilation $\delta_{\la(t)}(p):=\la(t)\,p$ with $\la(t)>0$, so that the volume of $\delta_{\la(t)}(\Sg_t)$ equals the volume of $\Sg$. Since $V(\delta_{\la(t)}(\Sg_t))=\la(t)^{n+1}\,V(t)$, see~\eqref{eq:dilarea}, we get
\begin{equation}
\label{eq:lat}
\la(t):=\left(\frac{V(\Sg)}{V(t)}\right)^{\frac{1}{n+1}}, \quad\text{for any } t\in(-\eps,\eps).
\end{equation}
In particular, $\la(0)=1$. We extend $\la(t)$ to a smooth positive function on $\rr$. If we define
\[ 
\phi_t:=\delta_{\la(t)}\circ\psi_t, \quad\text{for any }t\in\rr,
\]
then we produce an admissible flow for $\cc$ that preserves the volume of $\Sg$. Hence, the anisotropic stability of $\Sg$ entails that the functional $a_K(t):=A_K(\phi_t(\Sg))$ satisfies $a_K''(0)\geq 0$. To prove the theorem we need to compute $a_K''(0)$. By equation~\eqref{eq:dilarea} we know that
\begin{equation}
\label{eq:akt}
a_K(t)=A_K(\delta_{\la(t)}(\Sg_t))=\la(t)^n\,A_K(t).
\end{equation}
Thus, the calculus of $a_K''(0)$ relies on the values of $\la'(0)$, $\la''(0)$, $A_K'(0)$ and $A_K''(0)$.

By using equations~\eqref{eq:1stvol}, \eqref{eq:1stareast} and the fact that $u=\var_K$ on $\Sg$, we obtain
\begin{align}
\label{eq:prima1vol}
V'(0)&=A_K(\Sg),
\\
\label{eq:prima1area}
A_K'(0)&=-nH_K A_K(\Sg).
\end{align}
On the other hand, from the expression of $\la(t)$ in \eqref{eq:lat}, we have
\begin{equation}
\label{eq:latprima}
\la'(t)=-\frac{1}{n+1}\,V(\Sg)^{\frac{1}{n+1}}\,V(t)^{\frac{-n-2}{n+1}}\,V'(t),
\end{equation}
and so, from equation \eqref{eq:minkowski}, it follows that
\begin{equation}
\label{eq:latprima1}
\la'(0)=-\frac{1}{n+1}\,\frac{A_K(\Sg)}{V(\Sg)}=H_K.
\end{equation}

Now we compute $A_K''(0)$ and $\la''(0)$. From \eqref{eq:2ndvol} and \eqref{eq:2ndareast}, since $H_K$ is constant, we get
\begin{align}
\label{eq:prima2vol}
V''(0)&=-nH_KA_K(\Sg)+\alpha,
\\
\label{eq:prima2area}
A_K''(0)&=n^2H_K^2\,A_K(\Sg)-\int_\Sg\text{tr}(B_K^2)\,\var_K\,d\Sg-n H_K\,\alpha-\int_{\ptl\Sg}\frac{\text{II}(N_K,N_K)}{\cos\theta}\,\var_K\,d(\ptl\Sg),
\end{align}
where $\alpha:=\int_\Sg v\,d\Sg$ and $v:=\escpr{D_XX,N}$. Recall that $\text{II}$ is the second fundamental form of $\ptl\cc\setminus\{0\}$ with respect to the inner normal $\xi$. On the other hand, from \eqref{eq:latprima} we deduce
\begin{equation}
\label{eq:lat2prima}
\la''(t)=\frac{-1}{n+1}\,V(\Sg)^{\frac{1}{n+1}}\,\left(-\frac{n+2}{n+1}\,V(t)^{\frac{-2n-3}{n+1}}\,V'(t)^{2}+V(t)^{\frac{-n-2}{n+1}}\,V''(t)\right).
\end{equation}
By evaluating at $t=0$ and simplifying, equations~\eqref{eq:prima1vol} and \eqref{eq:prima2vol} give us
\[
\lambda''(0)=\frac{n+2}{(n+1)^2}\,\frac{A_K(\Sg)^2}{V(\Sg)^2}+\frac{n}{n+1}\,\frac{H_KA_K(\Sg)}{V(\Sg)}-\frac{1}{n+1}\,\frac{\alpha}{V(\Sg)}.
\]
When we employ the identity \eqref{eq:minkowski} in the three summands above, we arrive at
\begin{equation}
\label{eq:lat2prima2}
\la''(0)=2H_K^2+\frac{H_K}{A_K(\Sg)}\,\alpha.
\end{equation}

Finally, we differentiate into equation~\eqref{eq:akt} to infer
\[
a_K'(t)=n\,\la(t)^{n-1}\,\la'(t)\,A_K(t)+\la(t)^{n}\,A_K'(t).
\]
As a consequence
\[
a_K''(t)=n\left\{(n-1)\,\la(t)^{n-2}\,\la'(t)^{2}\,A_K(t)+\la(t)^{n-1}\,\la''(t)\,A_K(t)+2\la(t)^{n-1}\,\la'(t)\,A_K'(t)\right\}+\la(t)^n\,A_K''(t).
\]
By substituting above the expressions in \eqref{eq:latprima1}, \eqref{eq:lat2prima2}, \eqref{eq:prima1area}, \eqref{eq:prima2area}, and simplifying, we obtain
\begin{equation}
\label{eq:desi}
a_K''(0)=-\int_\Sg\big(\text{tr}(B_K^2)-nH_K^2\big)\,\var_K\,d\Sg-\int_{\ptl\Sg}\frac{\text{II}(N_K,N_K)}{\cos\theta}\,\var_K\,d(\ptl\Sg).
\end{equation}

The first integrand in the previous formula is nonnegative by \eqref{eq:ineq}. The convexity of $\cc$ implies that $\text{II}(N_K,N_K)\geq 0$ along $\ptl\Sg$. By the first equation in \eqref{eq:angles} we have $\cos\theta=\escpr{\nu,\xi}$, which is positive along $\ptl\Sg$ because $\cc$ is convex and $\nu$ points into $\Sg\subset\overline{\cc}$. Hence, the stability inequality $a_K''(0)\geq 0$ entails that $\text{tr}(B_K^2)=nH_K^2$ on $\Sg$ (this means that $\Sg$ is anisotropic umbilical) and $\text{II}(N_K,N_K)=0$ along $\ptl\Sg$. Since $H_K\neq 0$, the proof finishes by invoking Proposition~\ref{prop:umbilical}.
\end{proof}

The situation described in Example~\ref{ex:nonuniqueness} leads us to seek additional conditions on the cone in order to deduce stronger uniqueness conclusions. In this direction we can prove the next statement.

\begin{corollary}
\label{cor:stronger}
Let $\cc\subset\rrn$ be a solid cone over a smooth strictly convex domain of $\sph^n$. Consider a compact, connected, two-sided hypersurface $\Sg$ immersed in $\overline{\cc}$ with smooth boundary $\ptl\Sg$ in $\ptl\cc\setminus\{0\}$ and such that $\Sg\cap\ptl\cc=\ptl\Sg$. If $\Sg$ is anisotropic stable for the area $A_K$ defined by a smooth strictly convex body $K\subset\rrn$, then $\Sg=\la\,(\ptl K\cap\overline{\cc})$ for some $\la>0$.
\end{corollary}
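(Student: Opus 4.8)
The plan is to combine Theorem~\ref{th:main} with the extra rigidity that strict convexity of $\de$ imposes on the second fundamental form $\mathrm{II}$ of $\ptl\cc\setminus\{0\}$. By Theorem~\ref{th:main} and its proof, $\Sg$ is anisotropic umbilical with constant $H_K\ne 0$, and the boundary integrand in \eqref{eq:desi} vanishes; since $\var_K=h_K(N)>0$ (as $0$ is interior to $K$) and $\cos\theta=\escpr{\nu,\xi}>0$ along $\ptl\Sg$ (as $\cc$ is convex), this forces $\mathrm{II}(N_K,N_K)=0$ at each point of $\ptl\Sg$. Moreover, reasoning on the connected set $\Sg$ exactly as in the proof of Proposition~\ref{prop:umbilical}, the differential of $N_K+H_K\,\mathrm{Id}$ vanishes, so there is a fixed $c\in\rrn$ with $N_K(p)=c-H_K\,p$ on $\Sg$, and $\Sg\subseteq p_0+\la\,(\ptl K)$ with $p_0:=c/H_K$. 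The aim is to prove $c=0$: then $p_0=0$, and since $\cc$ is dilation invariant, $\Sg\subseteq\la\,(\ptl K)\cap\overline{\cc}=\la\,(\ptl K\cap\overline{\cc})$.

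The main new computation is the description of $\mathrm{II}$. The cone $\ptl\cc\setminus\{0\}$ is ruled by the open rays through its points and is flat in the radial direction, and its inner unit normal $\xi$ is constant along each ray; hence at any $q\in\ptl\cc\setminus\{0\}$ the radial line $\rr\,q$ lies in the kernel of $\mathrm{II}_q$. On the complementary directions, which are tangent to the cross-section $\la^{-1}\ptl\de$, the form $\mathrm{II}_q$ reduces to the second fundamental form of $\ptl\de\subset\sph^n$ with respect to its inner normal, and this is positive definite precisely because $\de$ is strictly convex. Thus $\mathrm{II}_q$ is positive semidefinite with one-dimensional kernel $\rr\,q$, and the condition $\mathrm{II}(N_K,N_K)=0$ along $\ptl\Sg$ is equivalent to $N_K(p)\in\rr\,p$ for every $p\in\ptl\Sg$ (note that $N_K(p)\ne 0$ and $p\ne 0$ there).

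Combining the two facts, $c-H_K\,p=N_K(p)$ is a scalar multiple of $p$ for each $p\in\ptl\Sg$, so $c\in\rr\,p$ for all $p\in\ptl\Sg$. If $c\ne 0$ this would force $\ptl\Sg\subseteq\rr\,c$, a single line through the origin; but $\ptl\Sg$ is a compact submanifold of dimension $n-1\ge 1$ contained in the strictly convex hypersurface $p_0+\la\,(\ptl K)$, which contains no segment and meets a line in at most two points, so $\ptl\Sg$ would be finite, a contradiction. Hence $c=0$ and $\Sg\subseteq\la\,(\ptl K\cap\overline{\cc})=:W$. To upgrade this to equality, note that $W$ is a connected compact hypersurface with boundary whose interior $\la\,(\ptl K)\cap\cc$ is simply connected (it is radially diffeomorphic to $\de$); since $\Sg$ is compact and $\Sg\cap\ptl\cc=\ptl\Sg$, the immersion restricts to a proper local diffeomorphism, hence a covering, from the connected set $\Sg\setminus\ptl\Sg$ onto $\la\,(\ptl K)\cap\cc$, which must therefore be a diffeomorphism; passing to closures yields $\Sg=W=\la\,(\ptl K\cap\overline{\cc})$.

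The first and last steps are routine (a reading of the proof of Theorem~\ref{th:main} and a standard topological argument), so I expect the crux to be the middle step: identifying the kernel of $\mathrm{II}_q$ with the ruling of the cone and concluding from $\mathrm{II}(N_K,N_K)=0$ that the anisotropic normal is radial along $\ptl\Sg$, which is exactly where strict convexity of the cross-section enters and is what pins the center of the Wulff shape at the vertex.
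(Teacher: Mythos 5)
Your proof is correct and follows essentially the same route as the paper: apply Theorem~\ref{th:main}, note that strict convexity of the base makes the second fundamental form of $\ptl\cc\setminus\{0\}$ positive on every non-radial tangent direction (radial directions being its kernel), deduce from $\mathrm{II}(N_K,N_K)=0$ that $N_K$ is radial along $\ptl\Sg$, and conclude that the center $p_0=c/H_K$ must be the vertex. Your additional details --- the dimension argument excluding $c\neq 0$ and the proper-covering argument upgrading the inclusion $\Sg\subseteq\la\,(\ptl K\cap\overline{\cc})$ to equality --- simply make explicit steps the paper leaves implicit.
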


\begin{proof}
From Theorem~\ref{th:main} we know that $\Sg\subset p_0+\la\,(\ptl K)$ for some $p_0\in\rrn$ and $\la>0$. If we define on $\Sg$ the unit normal vector $N(p):=\eta_K\big((p-p_0)/\la\big)$ then, by using \eqref{eq:normal} and equality $(\pi_K\circ\eta_K)(w)=w$ for any $w\in\ptl K$, we get $N_K(p)=(p-p_0)/\la$ for any $p\in\Sg$.

The fact that the base set $\mathcal{D}\subset\sph^n$ of the cone $\cc$ is a strictly convex domain means that the second fundamental form of $\ptl\mathcal{D}$ as a hypersurface of $\sph^n$ is always definite positive with respect to the inner unit normal. This implies that $\text{II}_p(w,w)>0$ for any $p\in\ptl\Sg$ and any  $w\in T_p(\ptl\cc)\setminus\{0\}$ orthogonal to $p$. From the identity $\text{II}(N_K,N_K)=0$ at the end of the proof of Theorem~\ref{th:main}, it follows that $(p-p_0)/\la$ is proportional to $p$, for any $p\in\ptl\Sg$. From here we get $p_0=0$, and this completes the proof.
\end{proof}

A special example of convex cone is a half-space $\mathcal{H}\subset\rrn$. Since $\ptl\mathcal{H}$ is smooth we do not need to assume $\ptl\Sg\subset\ptl\mathcal{H}\setminus\{0\}$. In this case we can deduce that $\ptl K\cap\overline{\mathcal{H}}$ is the unique compact anisotropic stable hypersurface in $\mathcal{H}$, up to translations along $\ptl\mathcal{H}$. A similar result for anisotropic stable capillary hypersurfaces in $\mathcal{H}$ has been given by Guo and Xia~\cite[Thm.~1.1]{guo-xia} with a different proof.

\begin{corollary}
\label{cor:halfspace}
Let $\Sg$ be a compact, connected, two-sided hypersurface immersed in an open half-space $\mathcal{H}\subset\rrn$ with smooth boundary $\ptl\Sg\subset\ptl\mathcal{H}$ such that $\Sg\cap\ptl\mathcal{H}=\ptl\Sg$. If $\Sg$ is anisotropic stable, then $\Sg=p_0+\la\,(\ptl K\cap\overline{\mathcal{H}})$ for some $p_0\in\ptl\mathcal{H}$ and $\la>0$.
\end{corollary}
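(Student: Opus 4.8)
The plan is to deduce this from Theorem~\ref{th:main} and Corollary~\ref{cor:stronger} by a limiting or direct argument that handles the fact that a half-space is a degenerate cone whose base is a hemisphere, not a \emph{strictly} convex domain of $\sph^n$. First I would observe that $\mathcal{H}$, after a translation, can be written as a solid cone over an open hemisphere $\de\subset\sph^n$ with vertex at any chosen boundary point $0\in\ptl\mathcal{H}$; since $\ptl\mathcal{H}$ is a hyperplane through $0$, it is smooth away from $0$ (indeed everywhere), and $\cc=\mathcal{H}$. Thus Theorem~\ref{th:main} applies verbatim and gives $p_0\in\rrn$ and $\la>0$ with $\Sg\subset p_0+\la\,(\ptl K)$. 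As in the proof of Corollary~\ref{cor:stronger}, setting $N(p):=\eta_K\big((p-p_0)/\la\big)$ yields $N_K(p)=(p-p_0)/\la$ on $\Sg$.

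The difference from Corollary~\ref{cor:stronger} is that $\ptl\mathcal{H}$ is totally geodesic, so $\text{II}\equiv 0$ and the boundary condition $\text{II}(N_K,N_K)=0$ obtained at the end of the proof of Theorem~\ref{th:main} is automatically satisfied and carries no information; hence I cannot conclude $p_0=0$. Instead the extra information must come from the anisotropic free boundary condition $\escpr{N_K,\xi}=0$ along $\ptl\Sg$, where $\xi$ is the constant inner unit normal to $\ptl\mathcal{H}$. Since $N_K(p)=(p-p_0)/\la$, this reads $\escpr{p-p_0,\xi}=0$ for every $p\in\ptl\Sg$; but $p\in\ptl\Sg\subset\ptl\mathcal{H}$ means $\escpr{p,\xi}=c$ for the constant $c$ defining $\ptl\mathcal{H}$, so $\escpr{p_0,\xi}=c$, i.e.\ $p_0\in\ptl\mathcal{H}$. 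With $p_0\in\ptl\mathcal{H}$, the translate $\ptl\mathcal{H}$ passes through $p_0$, so $p_0+\la\,(\ptl K\cap\overline{\mathcal{H}})=\Sg\cap\overline{\mathcal{H}}$ and, since $\Sg\subset\overline{\mathcal{H}}$ with $\Sg\cap\ptl\mathcal{H}=\ptl\Sg$, one checks $\Sg=p_0+\la\,(\ptl K\cap\overline{\mathcal{H}})$ (the piece of the translated Wulff shape lying in the half-space is exactly $\Sg$, using connectedness and that $\Sg$ is a compact hypersurface with the prescribed boundary).

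I would double-check the regularity remark made before the statement: since $\ptl\mathcal{H}$ is globally smooth, the hypothesis $\ptl\Sg\subset\ptl\cc\setminus\{0\}$ of Theorem~\ref{th:main} can be dropped here, as the origin-vertex is just an arbitrarily chosen point of $\ptl\mathcal{H}$ and all arguments in Section~\ref{sec:main} go through with $\ptl\Sg$ allowed to pass through it; equivalently one may simply pick the vertex $0$ not on $\ptl\Sg$, which is possible since $\ptl\Sg$ is a compact hypersurface in the hyperplane $\ptl\mathcal{H}$ and hence does not cover it. The main (and only mildly subtle) obstacle is precisely this point: recognizing that in the totally geodesic case the rigidity for the location of $p_0$ no longer comes from the second fundamental form term but from the anisotropic orthogonality condition $\escpr{N_K,\xi}=0$ on $\ptl\Sg$, which pins $p_0$ to $\ptl\mathcal{H}$ but not further — correctly reflecting the genuine non-uniqueness up to translations along $\ptl\mathcal{H}$.
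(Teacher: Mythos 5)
Your proof is correct and follows essentially the same route as the paper: apply Theorem~\ref{th:main}, obtain $N_K(p)=(p-p_0)/\la$ as in Corollary~\ref{cor:stronger}, and use the free boundary condition $\escpr{N_K,\xi}=0$ along $\ptl\Sg$ to pin $p_0$ to $\ptl\mathcal{H}$ (the paper phrases this via the line through $p$ in direction $p-p_0$ lying in the hyperplane, which is the same computation). Your device of choosing the vertex in $\ptl\mathcal{H}\setminus\ptl\Sg$ is a legitimate justification of the paper's remark that the hypothesis $\ptl\Sg\subset\ptl\cc\setminus\{0\}$ is not needed here.
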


\begin{proof}
After applying Theorem~\ref{th:main} it remains to see that $p_0\in\ptl\mathcal{H}$. By reasoning as in the proof of Corollary~\ref{cor:stronger} we obtain $N_K(p)=(p-p_0)/\la$, for any $p\in\Sg$. The orthogonality condition $\escpr{N_K,\xi}=0$ along $\ptl\Sg$ entails that $p-p_0\in T_p(\ptl\mathcal{H})$ for any $p\in\ptl\Sg$. As $\ptl\mathcal{H}$ is a Euclidean hyperplane, the straight line starting from a point $p\in\ptl\Sg$ with $p\neq p_0$ and generated by the vector $p-p_0$ is entirely contained in $\ptl\mathcal{H}$. This shows that $p_0\in\ptl\mathcal{H}$, as we claimed.
\end{proof}

\begin{remark}[Non-smooth cones]
The proof of Theorem~\ref{th:main} is still valid when $\cc$ is an arbitrary open convex cone (the base domain $\mathcal{D}\subset\sph^n$ need not be smooth), provided the boundary $\ptl\Sg$ is contained in a smooth open portion of $\ptl\cc$. For instance, the conclusion holds for a compact anisotropic stable hypersurface $\Sg$ disjoint from the edge of a domain $\cc$ bounded by two transversal hyperplanes. In this direction, Koiso~\cite[Thm.~4]{koiso-equilibrium}, \cite[Thm.~1]{koiso-2023} has characterized compact anisotropic stable capillary hypersurfaces in wedge-shaped domains of $\rrn$.
\end{remark}

\begin{remark}[Planar cones]
For a solid cone $\cc\subset\rr^2$, the boundary $\ptl\cc$ is the union of two-closed half-lines leaving from $0$. Thus, we have $\text{II}=0$ along $\ptl\cc\setminus\{0\}$ and the boundary term in \eqref{eq:desi} disappears. Hence, the conclusion of Theorem~\ref{th:main} remains valid even if the cone is not convex.
\end{remark}

\begin{remark}[Minkowski formula in $\cc$]
For a compact anisotropic stationary hypersurface $\Sg$ in a solid cone $\cc$, we can use \eqref{eq:volume} to write \eqref{eq:minkowski} as
\[
\int_\Sg\big(\var_K+H_K\escpr{p,N(p)}\big)\,d\Sg=0.
\]
Our proof of \eqref{eq:minkowski} implies that the previous identity is true for any compact hypersurface $\Sg$ immersed in $\cc$ with boundary $\ptl\Sg$ in $\ptl\cc\setminus\{0\}$ and satisfying $\escpr{N_K,\xi}=0$ along $\ptl\Sg$. Similar formulas were previously obtained by He and Li~\cite[Thm.~1.1]{he-li}, and Jia, Wang, Xia and Zhang~\cite[Thm.~1.3]{jwxz}. 
\end{remark}

\providecommand{\bysame}{\leavevmode\hbox to3em{\hrulefill}\thinspace}
\providecommand{\MR}{\relax\ifhmode\unskip\space\fi MR }
\providecommand{\MRhref}[2]{
  \href{http://www.ams.org/mathscinet-getitem?mr=#1}{#2}
}
\providecommand{\href}[2]{#2}


\begin{thebibliography}{10}

\bibitem{barbosa-silva}
E.~Barbosa and L.~C. Silva, \emph{Surfaces of constant anisotropic mean
  curvature with free boundary in revolution surfaces}, Manuscripta Math.
  \textbf{169} (2022), no.~3-4, 439--459. \MR{4493646}

\bibitem{bdc}
J.~L. Barbosa and M.~P. do~Carmo, \emph{Stability of hypersurfaces with
  constant mean curvature}, Math. Z. \textbf{185} (1984), no.~3, 339--353.
  \MR{MR731682 (85k:58021c)}

\bibitem{brothers-morgan}
J.~E. Brothers and F.~Morgan, \emph{The isoperimetric theorem for general
  integrands}, Michigan Math. J. \textbf{41} (1994), no.~3, 419--431.
  \MR{1297699}

\bibitem{cabre-rosoton-serra}
X.~Cabr\'{e}, X.~Ros-Oton, and J.~Serra, \emph{Sharp isoperimetric inequalities
  via the {ABP} method}, J. Eur. Math. Soc. (JEMS) \textbf{18} (2016), no.~12,
  2971--2998. \MR{3576542}

\bibitem{choe-cones}
J.~Choe and S.-H. Park, \emph{Capillary surfaces in a convex cone}, Math. Z.
  \textbf{267} (2011), no.~3-4, 875--886. \MR{2776063 (2012j:53009)}

\bibitem{clarenz-2002}
U.~Clarenz, \emph{Enclosure theorems for extremals of elliptic parametric
  functionals}, Calc. Var. Partial Differential Equations \textbf{15} (2002),
  no.~3, 313--324. \MR{1938817}

\bibitem{clarenz-2004}
\bysame, \emph{The {W}ulff shape minimizes an anisotropic {W}illmore
  functional}, Interfaces Free Bound. \textbf{6} (2004), no.~3, 351--359.
  \MR{2095337}

\bibitem{dipierro-poggesi-valdinoci}
S.~Dipierro, G.~Poggesi, and E.~Valdinoci, \emph{Radial symmetry of solutions
  to anisotropic and weighted diffusion equations with discontinuous
  nonlinearities}, Calc. Var. Partial Differential Equations \textbf{61}
  (2022), no.~2, Paper No. 72, 31. \MR{4380032}

\bibitem{figalli-indrei}
A.~Figalli and E.~Indrei, \emph{A sharp stability result for the relative
  isoperimetric inequality inside convex cones}, J. Geom. Anal. \textbf{23}
  (2013), no.~2, 938--969. \MR{3023863}

\bibitem{fonseca-muller}
I.~Fonseca and S.~M\"{u}ller, \emph{A uniqueness proof for the {W}ulff
  theorem}, Proc. Roy. Soc. Edinburgh Sect. A \textbf{119} (1991), no.~1-2,
  125--136. \MR{1130601}

\bibitem{guo-xia}
J.~Guo and C.~Xia, \emph{Stable anisotropic capillary hypersurfaces in the
  half-space}, arXiv:2301.03020, January 2023.

\bibitem{alexandrov-anisotropic}
Y.~He, H.~Li, H.~Ma, and J.~Ge, \emph{Compact embedded hypersurfaces with
  constant higher order anisotropic mean curvatures}, Indiana Univ. Math. J.
  \textbf{58} (2009), no.~2, 853--868. \MR{2514391}

\bibitem{he-li}
Y.~J. He and H.~Z. Li, \emph{Integral formula of {M}inkowski type and new
  characterization of the {W}ulff shape}, Acta Math. Sin. (Engl. Ser.)
  \textbf{24} (2008), no.~4, 697--704. \MR{2393162}

\bibitem{he-li-hopf}
\bysame, \emph{Anisotropic version of a theorem of {H}. {H}opf}, Ann. Global
  Anal. Geom. \textbf{35} (2009), no.~3, 243--247. \MR{2495974}

\bibitem{jwxz}
X.~Jia, G.~Wang, C.~Xia, and X.~Zhang, \emph{Alexandrov's theorem for
  anisotropic capillary hypersurfaces in the half-space}, arXiv:2211.02913,
  November 2022.

\bibitem{koiso-equilibrium}
M.~Koiso, \emph{Uniqueness of closed equilibrium hypersurfaces for anisotropic
  surface energy and application to a capillary problem}, Math. Comput. Appl.
  \textbf{24} (2019), no.~4, Paper No. 88, 15. \MR{4059741}

\bibitem{koiso-2019}
\bysame, \emph{Uniqueness of stable closed non-smooth hypersurfaces with
  constant anisotropic curvature}, arXiv:1903.03951, March 2019.

\bibitem{koiso-2023}
\bysame, \emph{Stable anisotropic capillary hypersurfaces in a wedge}, Math.
  Eng. \textbf{5} (2023), no.~2, Paper No. 029, 22. \MR{4431669}

\bibitem{koiso-palmer2}
M.~Koiso and B.~Palmer, \emph{Stability of anisotropic capillary surfaces
  between two parallel planes}, Calc. Var. Partial Differential Equations
  \textbf{25} (2006), no.~3, 275--298. \MR{2201674}

\bibitem{koiso-palmer4}
\bysame, \emph{Anisotropic capillary surfaces with wetting energy}, Calc. Var.
  Partial Differential Equations \textbf{29} (2007), no.~3, 295--345.
  \MR{2321891}

\bibitem{koiso-palmer3}
\bysame, \emph{Uniqueness theorems for stable anisotropic capillary surfaces},
  SIAM J. Math. Anal. \textbf{39} (2007), no.~3, 721--741. \MR{2349864}

\bibitem{koiso-palmer-hopf}
\bysame, \emph{Anisotropic umbilic points and {H}opf's theorem for surfaces
  with constant anisotropic mean curvature}, Indiana Univ. Math. J. \textbf{59}
  (2010), no.~1, 79--90. \MR{2666473}

\bibitem{lp}
P.-L. Lions and F.~Pacella, \emph{Isoperimetric inequalities for convex cones},
  Proc. Amer. Math. Soc. \textbf{109} (1990), no.~2, 477--485. \MR{1000160
  (90i:52021)}

\bibitem{maggi}
F.~Maggi, \emph{Sets of finite perimeter and geometric variational problems},
  Cambridge Studies in Advanced Mathematics, vol. 135, Cambridge University
  Press, Cambridge, 2012. \MR{2976521}

\bibitem{milman-rotem}
E.~Milman and L.~Rotem, \emph{Complemented {B}runn-{M}inkowski inequalities and
  isoperimetry for homogeneous and non-homogeneous measures}, Adv. Math.
  \textbf{262} (2014), 867--908. \MR{3228444}

\bibitem{morgan-alexandrov}
F.~Morgan, \emph{Planar {W}ulff shape is unique equilibrium}, Proc. Amer. Math.
  Soc. \textbf{133} (2005), no.~3, 809--813. \MR{2113931}

\bibitem{pacella-tralli}
F.~Pacella and G.~Tralli, \emph{Overdetermined problems and constant mean
  curvature surfaces in cones}, Rev. Mat. Iberoam. \textbf{36} (2020), no.~3,
  841--867. \MR{4109828}

\bibitem{palmer}
B.~Palmer, \emph{Stability of the {W}ulff shape}, Proc. Amer. Math. Soc.
  \textbf{126} (1998), no.~12, 3661--3667. \MR{1473676 (99b:58055)}

\bibitem{reilly-anisotropic}
R.~C. Reilly, \emph{The relative differential geometry of nonparametric
  hypersurfaces}, Duke Math. J. \textbf{43} (1976), no.~4, 705--721.
  \MR{425847}

\bibitem{cones}
M.~Ritor{{\'e}} and C.~Rosales, \emph{Existence and characterization of regions
  minimizing perimeter under a volume constraint inside {E}uclidean cones},
  Trans. Amer. Math. Soc. \textbf{356} (2004), no.~11, 4601--4622. \MR{2067135
  (2005g:49076)}

\bibitem{ros-souam}
A.~Ros and R.~Souam, \emph{On stability of capillary surfaces in a ball},
  Pacific J. Math. \textbf{178} (1997), no.~2, 345--361. \MR{1447419
  (98c:58029)}

\bibitem{schneider}
R.~Schneider, \emph{Convex bodies: the {B}runn-{M}inkowski theory}, expanded
  ed., Encyclopedia of Mathematics and its Applications, vol. 151, Cambridge
  University Press, Cambridge, 2014. \MR{3155183}

\bibitem{simon}
L.~Simon, \emph{Lectures on geometric measure theory}, Proceedings of the
  Centre for Mathematical Analysis, Australian National University, vol.~3,
  Australian National University Centre for Mathematical Analysis, Canberra,
  1983. \MR{756417 (87a:49001)}

\bibitem{taylor-unique}
J.~E. Taylor, \emph{Unique structure of solutions to a class of nonelliptic
  variational problems}, Differential geometry ({P}roc. {S}ympos. {P}ure
  {M}ath., {V}ol. {XXVII}, {P}art 1, {S}tanford {U}niv., {S}tanford, {C}alif.,
  1973), Amer. Math. Soc., Providence, R.I., 1975, pp.~419--427. \MR{0388225}

\bibitem{taylor}
\bysame, \emph{Crystalline variational problems}, Bull. Amer. Math. Soc.
  \textbf{84} (1978), no.~4, 568--588. \MR{493671}

\bibitem{wang-xia}
G.~Wang and C.~Xia, \emph{Uniqueness of stable capillary hypersurfaces in a
  ball}, Math. Ann. \textbf{374} (2019), no.~3-4, 1845--1882. \MR{3985125}

\bibitem{weng}
L.~Weng, \emph{An overdetermined problem of anisotropic equations in convex
  cones}, J. Differential Equations \textbf{268} (2020), no.~7, 3646--3664.
  \MR{4053600}

\bibitem{wente}
H.~C. Wente, \emph{A note on the stability theorem of {J}. {L}. {B}arbosa and
  {M}. {D}o {C}armo for closed surfaces of constant mean curvature}, Pacific J.
  Math. \textbf{147} (1991), no.~2, 375--379. \MR{1084716 (92g:53010)}

\bibitem{winklmann}
S.~Winklmann, \emph{A note on the stability of the {W}ulff shape}, Arch. Math.
  (Basel) \textbf{87} (2006), no.~3, 272--279. \MR{2258927}

\bibitem{wulff}
G.~Wulff, \emph{Zur {F}rage der {G}eschwindigkeit des {W}achstums und der
  {A}ufl{\"o}sung der {K}ristallfl{\"a}chen}, Z.~Kryst.~Miner. \textbf{34}
  (1901), 449--530.

\end{thebibliography}
\end{document}